\documentclass[12pt,a4paper]{article}
\usepackage[utf8]{inputenc}
\usepackage[english]{babel}
\usepackage{amsmath}
\usepackage{amsfonts}
\usepackage{amssymb}
\usepackage[left=2cm,right=2cm,top=2cm,bottom=2cm]{geometry}
\usepackage{amsthm}
\usepackage{multicol} 
\usepackage{latexsym}
%\usepackage[cal=mathpi,calscaled=.94,bb=ams,frak=mma,frakscaled=.97,scr=rsfs]{mathalfa}
%\linespread{1.5}
%\usepackage{subfigure}                      % figures and subfigures
%\usepackage{float}
%\usepackage{mathrsfs}
%\setlength{\parindent}{1em}
%\setlength{\parskip}{\baselineskip}

\newcommand{\bs}[1]{\boldsymbol{#1}}
\newcommand{\Div}[1]{\text{div}\,}

\newtheorem{lemma}{{Lemma}}[section]
\newtheorem{corollary}{Corollary}
\newtheorem{theorem}{{Theorem}}[section]

\numberwithin{equation}{section}
\linespread{1.25}
\title{Analysis of a Boundary-Domain Integral Equation System for the Mixed Interior Diffusion BVP with Variable Coefficient Based on a New Family of Parametrices}

\begin{document}\maketitle
\begin{center}
 S.E. MIKHAILOV, C.F.PORTILLO\footnote{Corresponding author, c.portillo@brookes.ac.uk}
\end{center}
\begin{abstract}
A mixed boundary value problem for the diffusion equation in non-homogeneous media partial differential equation is reduced to a system of direct segregated parametrix-based Boundary-Domain Integral Equations (BDIEs). We use a parametrix different from the one employed by Mikhailov in \cite{localised,carlos2} and Chkadua, Mikhailov, Natroshvili in \cite{mikhailov1}. We prove the equivalence between the original BVP and the corresponding BDIE system. The invertibility and Fredholm properties of the boundary-domain integral operators are also analysed.
\end{abstract}

\section{Introduction}
Boundary Domain Integral Equation Systems (BDIES) are often derived from a wide class of boundary value problems with variable coefficient in domains with smooth boundary: cf. \cite{mikhailov1} for a scalar mixed elliptic BVP in bounded domains; cf. \cite{exterior} for the corresponding problem in unbounded domains; cf. \cite{carlos1} for the mixed problem for the incompressible Stokes system in bounded domains, and cf. \cite{dufera}  for a 2D mixed elliptic problem in bounded domains. Further results on the theory of BDIES derived from BVP with variable coefficient can be found on
\cite{carlos2, iterative,localised,united, traces,numerics,miksolandreg,dufera,ayele}. Let us note that these type of BVPs model, for example, the heat transfer in non homogeneous media or the motion of a laminar fluid with variable viscosity. 
 
In order to deduce a BDIES from a BVP with variable coefficient a parametrix (see formula \eqref{parametrixdef}) is required since it keeps a strong relationship with the corresponding fundamental solution of the analogous BVP with constant coefficient. Using this relationship, it is possible to stablish further relations between the surface and volume potential type operators of the variable coefficient case with their counterparts from the constant coefficient case, see, e.g. \cite[Formulae (3.10)-(3.13)]{mikhailov1}, \cite[Formulae (34.10)-(34.16)]{carlos1}. 

For this scalar equation, a family of weakly singular parametrices of the form $P^{y}(x,y)$ for the particular operator
\[\mathcal{A}u(x) := \sum_{i=1}^{3}\dfrac{\partial}{\partial x_{i}}\left(a(x)\dfrac{\partial u(x)}{\partial x_{i}}\right),\]
 has been studied in \cite{mikhailov1,miksolandreg,exterior}. Note that the superscript in $P^{y}(x,y)$ means that $P^{y}(x,y)$ is a function of the variable coefficient depending on $y$, this is 
 $$P^{y}(x,y)= P(x,y;a(y))=\dfrac{-1}{4\pi a(y)\vert x-y\vert}.$$
 
 \textit{In this paper,} we study the parametrices for the operator $\mathcal{A}$ of the form
  $$P^{x}(x,y)= P(x,y;a(x))=\dfrac{-1}{4\pi a(x)\vert x-y\vert}.$$
 which can be useful at the time of studying BDIES derived from a BVP with a system of PDEs as illustrated in the following example. 
Let $\bs{\mathcal{B}}$ be the compressible Stokes operator with variable viscosity:
 \begin{align}
\mathcal{B}_{j}(p, \boldsymbol{v})(x ):&= 
\frac{\partial}{\partial x_{i}}\left(\mu(x)\nonumber
\left(\frac{\partial v_{j}}{\partial x_{i}} + \frac{\partial v_{i}}{\partial x_{j}}
-\frac{2}{3}\delta_{i}^{j} \Div (\boldsymbol{v})\right)\right)
-\frac{\partial p}{\partial x_{j}},\\
&j,i\in \lbrace 1,2,3\rbrace.\nonumber
\end{align}
%A parametrix for the operator $\bs{\mathcal{B}}$ of the form $\bs{P}_{\mathcal{B}}^{x,y}(x,y)$ has been studied in \cite{carlos1}.
Following the same notation as for $P^{y}(x,y)$, here $\bs{P}^{x,y}_{\mathcal{B}}(x,y)$ mean that the parametrix for the operator $\mathcal{B}$  includes the variable coefficient $\mu$ depending on $x$ and \textit{also} includes $\mu$ depending on $y$. This is $\bs{P}_{\mathcal{B}}^{x,y}(x,y):=(q^{k}(x,y),u_{j}^{k}(x,y))$
\begin{align*}
q^{k}(x,y)&=\frac{\mu(x)}{\mu(y)}\frac{(x_{k}-y_{k})}{4\pi\vert x-y\vert^{3}},\\
u_{j}^{k}(x,y)&=-\frac{1}{8\pi\mu(y)}\left\lbrace \dfrac{\delta_{j}^{k}}{\vert x-y\vert}+\dfrac{(x_{j}-y_{j})(x_{k}-y_{k})}{\vert x-y\vert^{3}}\right\rbrace, \hspace{0.5em}j,k\in \lbrace 1,2,3\rbrace.
\end{align*}

Then, it seems reasonable to study parametrices for a rather more simple problem of the type $P^{x}(x,y;a(x))$, which have not been analysed yet, before embarking in the analysis of boundary domain integral equations for the operator $\mathcal{B}$.  

There are fast computational techniques developed to solve BDIES. One is the discretisation of the BDIES using localised parametrices which leads to systems of linear equations with large sparsed matrices whose solution can be solve by fast iterative methods, see \cite[Section 1 and Section 5]{localised}, see also \cite{localised2,sladek}. Another fast method is using a collocation method along with hierarchical matrix compression technique
in conjunction with the adaptive cross approximation  procedure, this is shown in \cite{numerics}.

In order to study the possible numerical advantages of 
the new family of parametrices of the form $P^{x}(x,y;a(x))$ with respect to the parametrices already studied, it is necessary to prove the unique-solvency of an analogous BDIES derived with this new family of parametrices. This is the main purpose of this paper along with showing useful arguments that can be helpful at the time of studying BDIES derived from BVPs with variable coefficient which use parametrices of the same family studied in here.  

The main differences between the different families of parametrices are the relations between the parametrix-based potentials with their counterparts for the constant coefficient case.  Notwithstanding, the same mapping properties in Sobolev-Bessel potential spaces still hold allowing us to prove the equivalence between the BDIES and the BVP. 

An analysis of the uniqueness of the BDIES is performed by studying the Fredholm properties of the matrix operator which defines the system.

\section{Preliminaries and the BVP}
Let $\Omega=\Omega^{+}$ be a bounded simply connected domain, $\Omega^{-}:=\mathbb{R}^{3}\smallsetminus\bar{\Omega}^{+}$ the complementary (unbounded) subset of $\Omega$. The boundary $S := \partial\Omega$ is simply connected, closed and infinitely differentiable, $S\in\mathcal{C}^{\infty}$. Furthermore, $S :=\overline{S}_{N}\cup \overline{S}_{D}$ where both $S_{N}$ and $S_{D}$ are non-empty, connected disjoint manifolds of $S$. The border of these two submanifolds is also infinitely differentiable, $\partial S_{N}= \partial S_{D}\in\mathcal{C}^{\infty}$. 

Let us introduce the following partial differential equation with variable smooth positive coefficient $a(x)\in \mathcal{C}^{\infty}(\overline{\Omega})$:
\begin{equation}\label{ch4operatorA}
\mathcal{A}u(x):=\sum_{i=1}^{3}\dfrac{\partial}{\partial x_{i}}\left(a(x)\dfrac{\partial u(x)}{\partial x_{i}}\right)=f(x),\,\,x\in \Omega,
\end{equation}
 where $u(x)$ is an unknown function and $f$ is a given function on $\Omega$. It is easy to see that if $a\equiv 1$ then, the operator $\mathcal{A}$ becomes $\Delta$, the Laplace operator.
 
 We will use the following function spaces in this paper (see e.g. \cite{mclean, lions, hsiao} for more details). Let $\mathcal{D}'(\Omega)$ be the
Schwartz distribution space; $H^{s}(\Omega)$ and $H^{s}(S)$ with $s\in \mathbb{R}$,
the Bessel potential spaces; the space $H^{s}_{K}(\mathbb{R}^{3})$ consisting of all the distributions of $H^{s}(\mathbb{R}^{3})$ whose support is inside of a compact set $K\subset \mathbb{R}^{3}$; the spaces consisting of distributions in $H^{s}(K)$ for every compact $K\subset \overline{\Omega^{-}},\hspace{0.1em}s\in\mathbb{R}$. Let us introduce the following Sobolev-Bessel potentials on the boundary:
\begin{equation*}
\widetilde{H}^{s}(S):= \overline{\mathcal{C}^{\infty}_{0}(S)}^{\parallel\cdot\parallel_{H^{s}(\mathbb{R}^{3})}},\quad\quad\quad H^{s}(S):= \overline{\mathcal{C}^{\infty}_{0}(S)}^{\parallel\cdot\parallel_{H^{s}(S)}},
\end{equation*} 
 whose characterizations are given as follows: $\widetilde{H}^{s}(S_{1})=\lbrace g\in H^{s}(S): {\rm supp}(g)\subset \overline{S_{1}}\rbrace$; $H^{s}(S_{1})=\lbrace g\vert_{S_{1}}: g\in H^{s}(S)\rbrace$, where the notation $g\vert_{S_{1}} = r_{S_{1}}g$ is used to indicate the restriction of the function $g$ from $S$ to $S_{1}$.  
 
We will make use of the space, see e.g. \cite{costabel, mikhailov1},
\begin{center}
 $H^{1,0}(\Omega; \mathcal{A}):= \lbrace u \in H^{1}(\Omega): \mathcal{A}u\in L^{2}(\Omega)\rbrace $
 \end{center} 
 which is a Hilbert space with the norm defined by
\begin{center}
$\parallel u \parallel^{2}_{H^{1,0}(\Omega; \mathcal{A})}:=\parallel u \parallel^{2}_{H^{1}(\Omega)}+\parallel \mathcal{A}u  \parallel^{2}_{L^{2}(\Omega)}$.
\end{center}

\paragraph{Traces and conormal derivatives.}
For a scalar function $w\in H^{s}(\Omega^\pm)$, $s>1/2$, the trace operator $\gamma^{\pm}(\,\cdot \,):=\gamma_{S}^\pm(\,\cdot\,)$, acting on $w$ is well defined and $\gamma^{\pm}w\in H^{s-\frac{1}{2}}(S)$ (see, e.g., \cite{mclean, traces}). 
For $u\in H^{s}(\Omega)$, $s>3/2$, we can define on $S$ the  conormal derivative operator, $T^{\pm}$, in the classical (trace) sense
\begin{equation*}\label{ch4conormal}
T^{\pm}_{x}u := \sum_{i=1}^{3}a(x)\gamma^{\pm}\left( \dfrac{\partial u}{\partial x_{i}}\right)^{\pm}n_{i}^{\pm}(x),
\end{equation*}
where $n^{+}(x)$ is the exterior unit normal vector directed \textit{outwards} the interior domain $\Omega$ at a point $x\in S$. Similarly, $n^{-}(x)$ is the unit normal vector directed \textit{inwards} the interior domain $\Omega$ at a point $x\in S$. 

Furthermore, we will use the notation $T^{\pm}_{x}u$ or $T^{\pm}_{y}u$ to emphasise which respect to which variable we are differentiating. When the variable of differentiation is obvious or is a dummy variable, we will simply use the notation $T^{\pm}u$.

Moreover, for any function $u\in H^{1,0}(\Omega; \mathcal{A})$, the \textit{canonical} conormal derivative $T^{\pm}u\in H^{-\frac{1}{2}}(\Omega)$, is well defined, cf. \cite{costabel,mclean,traces},
\begin{equation}\label{ch4green1}
\langle T^{\pm}u, w\rangle_{S}:= \pm \int_{\Omega^{\pm}}[(\gamma^{-1}\omega)\mathcal{A}u +E(u,\gamma^{-1}w)] dx,\,\, w\in H^{\frac{1}{2}}(S),
\end{equation}
where $\gamma^{-1}: H^{\frac{1}{2}}(S)\longrightarrow H_{K}^{1}(\mathbb{R}^{3})$ is a continuous right inverse to the trace operator whereas the function $E$ is defined as
\begin{equation*}\label{ch4functionalE}
E(u,v)(x):=\sum_{i=1}^{3}a(x)\dfrac{\partial u(x)}{\partial x_{i}}\dfrac{\partial v(x)}{\partial x_{i}},
\end{equation*}
and $\langle\, \cdot \, , \, \cdot \, \rangle_{S}$ represents the $L^{2}-$based dual form on $S$.

We aim to derive boundary-domain integral equation systems for the following \textit{mixed} boundary value problem. Given $f\in L^{2}(\Omega)$, $\phi_{0}\in H^{\frac{1}{2}}(S_D)$ and $\psi_{0}\in H^{-\frac{1}{2}}(S_N)$, we seek a function $u\in H^{1}(\Omega)$ such that 
\begin{subequations}\label{ch4BVP}
\begin{align}
\mathcal{A}u&=f,\hspace{1em}\text{in}\hspace{1em}\Omega\label{ch4BVP1};\\
r_{S_{D}}\gamma^{+}u &= \phi_{0},\hspace{1em}\text{on}\hspace{1em} S_{D}\label{ch4BVP2};\\
r_{S_{N}}T^{+}u &=\psi_{0},\hspace{1em}\text{on}\hspace{1em} S_{N};\label{ch4BVP3}
\end{align}
where equation \eqref{ch4BVP1} is understood in the weak sense, the Dirichlet condition \eqref{ch4BVP2} is understood in the trace sense and the Neumann condition \eqref{ch4BVP3} is understood in the functional sense \eqref{ch4green1}.
\end{subequations}

By Lemma 3.4 of \cite{costabel} (cf. also Theorem 3.9 in \cite{traces}), the first Green identity holds for any $u\in H^{1,0}(\Omega; \mathcal{A})$ and $v\in H^{1}(\Omega)$,
\begin{equation}\label{ch4green1.1}
\langle T^{\pm}u, \gamma^{+}v\rangle_{S}:= \pm \int_{\Omega}[v\mathcal{A}u +E(u,v)] dx.
\end{equation}
The following assertion is well known and can be proved, e.g., using the Lax-Milgram lemma as in \cite[Chapter 4]{steinbach}. 
\begin{theorem}\label{ch4Thomsol}
The boundary value problem \eqref{ch4BVP} has one and only one  solution.
\end{theorem}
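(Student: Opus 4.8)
The plan is to recast the mixed problem as a coercive variational problem on a closed subspace of $H^1(\Omega)$ and then invoke the Lax--Milgram lemma. First I would introduce the test space
\[
H^1_{S_D}(\Omega):=\{v\in H^1(\Omega): r_{S_D}\gamma^+ v = 0\},
\]
a closed subspace of $H^1(\Omega)$, and the symmetric bilinear form $a(u,v):=\int_\Omega E(u,v)\,dx$. Testing the first Green identity \eqref{ch4green1.1} against $v\in H^1_{S_D}(\Omega)$ and using $\mathcal{A}u=f$ together with the splitting of the boundary term $\langle T^+u,\gamma^+v\rangle_S = \langle r_{S_N}T^+u, \gamma^+ v\rangle_{S_N}$ (valid because $\gamma^+ v$ is supported in $\overline{S_N}$, i.e. $\gamma^+ v\in\widetilde{H}^{1/2}(S_N)$) yields the variational identity
\[
a(u,v)=\langle\psi_0,\gamma^+v\rangle_{S_N}-\int_\Omega fv\,dx=:F(v),\qquad v\in H^1_{S_D}(\Omega).
\]
To remove the inhomogeneous Dirichlet datum I would fix a lifting $\Phi\in H^1(\Omega)$ with $r_{S_D}\gamma^+\Phi=\phi_0$, obtained by extending $\phi_0$ to some $\Phi_0\in H^{1/2}(S)$ and applying the right inverse $\gamma^{-1}$; writing $u=u_0+\Phi$ reduces the problem to finding $u_0\in H^1_{S_D}(\Omega)$ with $a(u_0,v)=F(v)-a(\Phi,v)$ for all $v\in H^1_{S_D}(\Omega)$.

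Next I would verify the Lax--Milgram hypotheses. Boundedness of $a$ is immediate from $a\in\mathcal{C}^\infty(\overline{\Omega})$ and Cauchy--Schwarz, $|a(u,v)|\le \|a\|_{L^\infty(\Omega)}\|\nabla u\|_{L^2}\|\nabla v\|_{L^2}$. The right-hand side $v\mapsto F(v)-a(\Phi,v)$ is a bounded linear functional on $H^1_{S_D}(\Omega)$: the volume term is controlled by $f\in L^2(\Omega)$, the term $a(\Phi,v)$ by the bound on $a$, and the boundary term by the duality between $\psi_0\in H^{-1/2}(S_N)$ and $\gamma^+v\in\widetilde{H}^{1/2}(S_N)$ together with continuity of the trace.

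The key step, and the only nontrivial one, is coercivity of $a$ on $H^1_{S_D}(\Omega)$. Since $a$ is continuous and strictly positive on the compact set $\overline{\Omega}$, we have $a_{\min}:=\min_{\overline{\Omega}}a>0$ and hence $a(v,v)\ge a_{\min}\|\nabla v\|_{L^2(\Omega)}^2$. Because $S_D$ is a nonempty submanifold of positive surface measure and every $v\in H^1_{S_D}(\Omega)$ has vanishing trace there, a Poincar\'e--Friedrichs inequality gives $\|\nabla v\|_{L^2(\Omega)}^2\ge c\,\|v\|_{H^1(\Omega)}^2$ for some $c>0$; combining the two bounds furnishes coercivity. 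This is where the hypotheses that $S_D\neq\emptyset$ and that $a$ is bounded below play their essential role, and establishing (or citing) the partial-boundary Poincar\'e inequality is the main obstacle.

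Lax--Milgram then provides a unique $u_0\in H^1_{S_D}(\Omega)$, hence a unique $u=u_0+\Phi\in H^1(\Omega)$ satisfying the variational identity and, by construction, the Dirichlet condition \eqref{ch4BVP2}. To conclude, I would check that this $u$ solves the BVP in the stated senses: testing against $v\in\mathcal{C}_0^\infty(\Omega)\subset H^1_{S_D}(\Omega)$ recovers $\mathcal{A}u=f$ in $\mathcal{D}'(\Omega)$, whence $\mathcal{A}u=f\in L^2(\Omega)$ and $u\in H^{1,0}(\Omega;\mathcal{A})$, so that the canonical conormal derivative $T^+u$ is well defined via \eqref{ch4green1}; feeding this back into the variational identity and varying $\gamma^+v$ over $\widetilde{H}^{1/2}(S_N)$ recovers the Neumann condition \eqref{ch4BVP3}. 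Uniqueness is already guaranteed by Lax--Milgram (equivalently, $f=\phi_0=\psi_0=0$ forces $a(u,u)=0$, and coercivity gives $u=0$).
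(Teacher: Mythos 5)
Your proposal is correct and follows exactly the route the paper indicates: the paper does not write out a proof but states the result is well known and can be proved via the Lax--Milgram lemma as in Steinbach (Chapter 4), which is precisely the variational argument you carry out in detail (lifting of the Dirichlet datum, coercivity on $H^1_{S_D}(\Omega)$ via the partial-boundary Poincar\'e inequality, and recovery of the weak PDE and the canonical-conormal Neumann condition).
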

 
\section{Parametrices and remainders}
 We define a parametrix (Levi function) $P(x,y)$ for a differential operator $\mathcal{A}_{x}$ differentiating with respect to $x$ as a function on two variables that satisfies
 \begin{equation}\label{parametrixdef}
 \mathcal{A}_{x}P(x,y) = \delta(x-y)+R(x,y).
 \end{equation}
For a given operator $\mathcal{A}$, the parametrix is not unique. For example, the parametrix 
\begin{equation*}\label{ch4P2002}
P^y(x,y)=\dfrac{1}{a(y)} P_\Delta(x-y),\hspace{1em}x,y \in \mathbb{R}^{3},
\end{equation*} 
was employed in \cite{localised, mikhailov1}, for the operator $\mathcal{A}$ defined in \eqref{ch4operatorA},  where
\begin{equation*}\label{ch4fundsol}
P_{\Delta}(x-y) = \dfrac{-1}{4\pi \vert x-y\vert}
\end{equation*}
is the fundamental solution of the Laplace operator.
The remainder corresponding to the parametrix $P^{y}$ is 
\begin{equation*}
\label{ch43.4} R^y(x,y)
=\sum\limits_{i=1}^{3}\frac{1}{a(y)}\, \frac{\partial a(x)}{\partial x_i} \frac{\partial }{\partial x_i}P_\Delta(x-y)
%=\sum\limits_{i=1}^{3}\frac{x_i-y_i}{4\pi \,a(y)\,|x-y|^3}\, \frac{\partial a(x)}{\partial x_i}
\,,\;\;\;x,y\in {\mathbb R}^3.
\end{equation*}
{\em In this paper}, for the same operator $\mathcal{A}$ defined in \eqref{ch4operatorA}, we will use another parametrix,  
\begin{align}\label{ch4Px}
P(x,y):=P^x(x,y)=\dfrac{1}{a(x)} P_\Delta(x-y),\hspace{1em}x,y \in \mathbb{R}^{3},
\end{align}
which leads to the corresponding remainder 
\begin{align*}\label{ch4remainder}
R(x,y) =R^x(x,y) &= 
-\sum\limits_{i=1}^{3}\dfrac{\partial}{\partial x_{i}}
\left(\frac{1}{a(x)}\dfrac{\partial a(x)}{\partial x_{i}}P_{\Delta}(x,y)\right)\\
& =
-\sum\limits_{i=1}^{3}\dfrac{\partial}{\partial x_{i}}
\left(\dfrac{\partial \ln a(x)}{\partial x_{i}}P_{\Delta}(x,y)\right),\hspace{0.5em}x,y \in \mathbb{R}^{3}.
\end{align*}

Note that the both remainders $R_x$ and $R_y$ are weakly singular, i.e., \[
R^x(x,y),\,R^y(x,y)\in \mathcal{O}(\vert x-y\vert^{-2}).\]
 This is due to the smoothness of the variable coefficient $a$.

\section{Volume and surface potentials}

The volume parametrix-based Newton-type potential and the remainder potential are respectively defined, for $y\in\mathbb R^3$, as
\begin{align*}
\mathcal{P}\rho(y)&:=\displaystyle\int_{\Omega} P(x,y)\rho(x)\hspace{0.25em}dx\\
\mathcal{R}\rho(y)&:=\displaystyle\int_{\Omega} R(x,y)\rho(x)\hspace{0.25em}dx.
\end{align*}

 The parametrix-based single layer and double layer  surface potentials are defined for $y\in\mathbb R^3:y\notin S $, as 
\begin{equation*}\label{ch4SL}
V\rho(y):=-\int_{S} P(x,y)\rho(x)\hspace{0.25em}dS(x),
\end{equation*}
\begin{equation*}\label{ch4DL}
W\rho(y):=-\int_{S} T_{x}^{+}P(x,y)\rho(x)\hspace{0.25em}dS(x).
\end{equation*}

We also define the following pseudo-differential operators associated with direct values of the single and  double layer potentials and with their conormal derivatives, for $y\in S$,
\begin{align}
\mathcal{V}\rho(y)&:=-\int_{S} P(x,y)\rho(x)\hspace{0.25em}dS(x),\nonumber \\
\mathcal{W}\rho(y)&:=-\int_{S} T_{x}P(x,y)\rho(x)\hspace{0.25em}dS(x),\nonumber\\
\mathcal{W'}\rho(y)&:=-\int_{S} T_{y}P(x,y)\rho(x)\hspace{0.25em}dS(x),\nonumber\\
\mathcal{L}^{\pm}\rho(y)&:=T_{y}^{\pm}{W}\rho(y)\nonumber.
\end{align}

The operators $\mathcal P, \mathcal R, V, W, \mathcal{V}, \mathcal{W}, \mathcal{W'}$ and $\mathcal{L}$ can be expressed in terms the volume and surface potentials and operators associated with the Laplace operator, as follows
\begin{align}
\mathcal{P}\rho&=\mathcal{P}_{\Delta}\left(\dfrac{\rho}{a}\right),\label{ch4relP}\\
\mathcal{R}\rho&=\nabla\cdot\left[\mathcal{P}_{\Delta}(\rho\,\nabla \ln a)\right]-\mathcal{P}_{\Delta}(\rho\,\Delta \ln a),\label{ch4relR}\\
V\rho &= V_{\Delta}\left(\dfrac{\rho}{a}\right),\label{ch4relSL}\\
\mathcal{V}\rho &= \mathcal{V}_{\Delta} \left( \dfrac{\rho}{a}\right),\label{ch4relDVSL}\\
W\rho &= W_{\Delta}\rho -V_{\Delta}\left(\rho\frac{\partial \ln a}{\partial n}\right),\label{ch4relDL}\\
\mathcal{W}\rho &= \mathcal{W}_{\Delta}\rho -\mathcal{V}_{\Delta}\left(\rho\frac{\partial \ln a}{\partial n}\right),\label{ch4relDVDL}\\
\mathcal{W}'\rho &= a \mathcal{W'}_{\Delta}\left(\dfrac{\rho}{a}\right),\label{ch4relTSL} \\
\mathcal{L}^{\pm}\rho &= \widehat{\mathcal{L}}\rho - aT^{\pm}_\Delta V_{\Delta}\left(\rho\frac{\partial \ln a}{\partial n}\right),
\label{ch4relTDL}\\
\widehat{\mathcal{L}}\rho &:= a\mathcal{L}_{\Delta}\rho.\label{ch4hatL}
\end{align}

The symbols with the subscript $\Delta$ denote the analogous operator for the constant coefficient case, $a\equiv 1$. Furthermore, by the Liapunov-Tauber theorem, $\mathcal{L}_{\Delta}^{+}\rho = \mathcal{L}_{\Delta}^{-}\rho = \mathcal{L}_{\Delta}\rho$.

Using relations \eqref{ch4relP}-\eqref{ch4hatL} it is now rather simple to obtain, similar to \cite{mikhailov1}, the mapping properties, jump relations and invertibility results for the parametrix-based  surface and volume potentials, provided in theorems/corollary \ref{ch4thmUR}-\ref{ch4thinvV}, from the well-known properties of their constant-coefficient counterparts (associated with the Laplace equation).
\newpage
 
 \begin{theorem}\label{ch4thmUR} Let $s\in \mathbb{R}$. Then, the following operators are continuous,
 \begin{equation*}
 \mathcal{P}:\widetilde{H}^{s}(\Omega) \longrightarrow H^{s+2}(\Omega),\hspace{0.5em} s\in \mathbb{R},\label{ch4mpvp1}\\
 \end{equation*}
 \begin{equation*}
 \mathcal{P}: H^{s}(\Omega) \longrightarrow H^{s+2}(\Omega),\hspace{0.5em} s>-\dfrac{1}{2},\label{ch4mpvp2}\\
 \end{equation*}
 \begin{equation*}
 \mathcal{R}:\widetilde{H}^{s}(\Omega) \longrightarrow H^{s+1}(\Omega),\hspace{0.5em} s\in \mathbb{R},\label{ch4mpvp3}\\
 \end{equation*}
 \begin{equation*}
 \mathcal{R}: H^{s}(\Omega) \longrightarrow H^{s+1}(\Omega),\hspace{0.5em} s>-\dfrac{1}{2}\,.\label{ch4mpvp4}
 \end{equation*}
 \end{theorem}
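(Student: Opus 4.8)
The plan is to reduce all four statements to the known mapping properties of the classical Newton potential $\mathcal{P}_\Delta$ associated with the Laplace operator, using the explicit relations \eqref{ch4relP} and \eqref{ch4relR} together with the hypothesis that $a\in\mathcal{C}^\infty(\overline\Omega)$ is strictly positive. Two ingredients underlie everything. First, $\mathcal{P}_\Delta$ is a pseudodifferential operator of order $-2$, so that $\mathcal{P}_\Delta:\widetilde{H}^s(\Omega)\to H^{s+2}(\mathbb{R}^3)$ and, composing with the (bounded) restriction to $\Omega$, $\mathcal{P}_\Delta:\widetilde{H}^s(\Omega)\to H^{s+2}(\Omega)$ continuously for every $s\in\mathbb{R}$; this is classical and I would simply quote it from \cite{mclean, costabel, mikhailov1}. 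Second, multiplication by any function in $\mathcal{C}^\infty(\overline\Omega)$ — in particular by $1/a$, by the components of $\nabla\ln a$, or by $\Delta\ln a$, all of which are smooth because $a$ is smooth and bounded away from zero — is a bounded operator of order $0$ on both scales $\widetilde{H}^s(\Omega)$ and $H^s(\Omega)$ for all $s\in\mathbb{R}$.

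For $\mathcal{P}$, relation \eqref{ch4relP} exhibits $\mathcal{P}=\mathcal{P}_\Delta\circ M_{1/a}$, where $M_{1/a}$ is the multiplication operator by $1/a$. Since $M_{1/a}:\widetilde{H}^s(\Omega)\to\widetilde{H}^s(\Omega)$ is bounded and $\mathcal{P}_\Delta$ gains two derivatives, the composition is bounded $\widetilde{H}^s(\Omega)\to H^{s+2}(\Omega)$, which is the first assertion.

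For $\mathcal{R}$, I would read \eqref{ch4relR} as a sum of two terms and track orders. In the first term, $M_{\nabla\ln a}$ (order $0$) is followed by $\mathcal{P}_\Delta$ (order $-2$) and then by the divergence $\nabla\cdot$ (order $+1$), so this term gains $2-1=1$ derivative, landing in $H^{s+1}(\Omega)$. In the second term, $M_{\Delta\ln a}$ followed by $\mathcal{P}_\Delta$ gains two derivatives, and one then uses the embedding $H^{s+2}(\Omega)\hookrightarrow H^{s+1}(\Omega)$. Adding the two contributions gives $\mathcal{R}:\widetilde{H}^s(\Omega)\to H^{s+1}(\Omega)$ bounded for every $s\in\mathbb{R}$; the loss of one order relative to $\mathcal{P}$ is exactly the price of the divergence in \eqref{ch4relR}.

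The passage to the statements on $H^s(\Omega)$ for $s>-1/2$ is the only step requiring genuine care, and it is where I expect the main (if modest) obstacle to lie. The potentials are defined through integration over $\Omega$, so acting on $\rho\in H^s(\Omega)$ implicitly requires extending $\rho$ to a distribution on $\mathbb{R}^3$ supported in $\overline\Omega$, i.e.\ into $\widetilde{H}^s(\Omega)$; a bounded such extension $H^s(\Omega)\to\widetilde{H}^s(\Omega)$ is available precisely for $s>-1/2$, which is the threshold below which the two spaces cease to be compatible. I would invoke the corresponding $H^s(\Omega)$-version of the mapping property of $\mathcal{P}_\Delta$ from \cite{mikhailov1, costabel} rather than reconstruct the extension argument, after which the composition bounds of the previous two paragraphs transfer verbatim and yield the second and fourth mappings. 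This explains why the first and third assertions hold for all $s\in\mathbb{R}$ while the second and fourth carry the hypothesis $s>-1/2$.
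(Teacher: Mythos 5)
Your proposal is correct and follows essentially the paper's own route: the paper offers no detailed proof, merely observing that these mapping properties follow from the relations \eqref{ch4relP}--\eqref{ch4hatL} and the well-known properties of the constant-coefficient (Laplace) potentials, which is exactly your reduction $\mathcal{P}=\mathcal{P}_\Delta\circ M_{1/a}$ and the two-term order count for $\mathcal{R}$ via smooth multiplications, $\mathcal{P}_\Delta$, and the divergence. One caveat: your aside that a bounded zero-extension $H^{s}(\Omega)\to\widetilde{H}^{s}(\Omega)$ exists ``precisely for $s>-1/2$'' is inaccurate (such an identification holds only for $-1/2<s<1/2$; for $s\ge 1/2$ the $H^{s}(\Omega)$ mapping property of $\mathcal{P}_\Delta$ rests on elliptic regularity/interpolation instead), but since you ultimately invoke the $H^{s}(\Omega)$-version of the constant-coefficient result from \cite{mikhailov1} rather than that extension argument, your proof is unaffected.
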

 \begin{corollary}\label{ch4corcompact}Let $s>\frac{1}{2}$,  let $S_{1}$ be a non-empty submanifold of $S$ with smooth boundary. Then, the following operators are compact:
 \begin{align*}
 \mathcal{R}&: H^{s}(\Omega) \longrightarrow H^{s}(\Omega),\\
 r_{S_{1}}\gamma^{+}\mathcal{R}&: H^{s}(\Omega) \longrightarrow H^{s-\frac{1}{2}}(S_{1}),\\
 r_{S_{1}}T^{+}\mathcal{R}&: H^{s}(\Omega) \longrightarrow H^{s-\frac{3}{2}}(S_{1}).
 \end{align*}
 \end{corollary}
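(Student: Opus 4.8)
The plan is to exploit the one-order smoothing of the remainder potential established in Theorem~\ref{ch4thmUR}, combined with the Rellich--Kondrachov compact embedding theorem, by writing each of the three operators as a composition of bounded maps whose final factor is a compact Sobolev embedding. The guiding principle is that the composition of a bounded operator with a compact operator is compact, so in each case it suffices to isolate a single compact embedding at the end of the chain, all the preceding factors being continuous.

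For the first operator I would simply observe that Theorem~\ref{ch4thmUR} gives boundedness of $\mathcal{R}\colon H^{s}(\Omega)\to H^{s+1}(\Omega)$ for $s>-\tfrac12$, in particular for $s>\tfrac12$. Since $\Omega$ is bounded with smooth boundary and $s+1>s$, the embedding $H^{s+1}(\Omega)\hookrightarrow H^{s}(\Omega)$ is compact by the Rellich theorem. Hence the composition
\[
H^{s}(\Omega)\xrightarrow{\ \mathcal{R}\ }H^{s+1}(\Omega)\hookrightarrow H^{s}(\Omega)
\]
is compact, which is the first assertion.

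For the trace operator I would factor $r_{S_1}\gamma^{+}\mathcal{R}$ as
\[
H^{s}(\Omega)\xrightarrow{\ \mathcal{R}\ }H^{s+1}(\Omega)\xrightarrow{\ \gamma^{+}\ }H^{s+\frac12}(S)\xrightarrow{\ r_{S_1}\ }H^{s+\frac12}(S_1)\hookrightarrow H^{s-\frac12}(S_1),
\]
where the trace map is bounded because $s+1>\tfrac12$, the restriction is bounded, and the final embedding is compact since $S_1$ is a smooth submanifold with smooth boundary of the compact manifold $S$ and $s+\tfrac12>s-\tfrac12$. The three leading maps being bounded and the last one compact, the composition is compact. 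The conormal-derivative operator is handled in exactly the same fashion via
\[
H^{s}(\Omega)\xrightarrow{\ \mathcal{R}\ }H^{s+1}(\Omega)\xrightarrow{\ T^{+}\ }H^{s-\frac12}(S)\xrightarrow{\ r_{S_1}\ }H^{s-\frac12}(S_1)\hookrightarrow H^{s-\frac32}(S_1),
\]
the crucial point being that for $w\in H^{s+1}(\Omega)$ the first derivatives $\partial w/\partial x_i$ lie in $H^{s}(\Omega)$ and, since $s>\tfrac12$, admit traces in $H^{s-\frac12}(S)$; multiplication by the smooth coefficient $a$ and by the smooth normal components $n_i$ preserves this space, so $T^{+}\colon H^{s+1}(\Omega)\to H^{s-\frac12}(S)$ is bounded.

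The argument is essentially structural, and the only genuine verifications are that the one-order gain of $\mathcal{R}$ produces, in each target space, the strict inequality of Sobolev indices required for Rellich compactness, and the boundedness of the classical conormal derivative $T^{+}\colon H^{s+1}(\Omega)\to H^{s-\frac12}(S)$. I expect the latter to be the most delicate point: the conormal derivative is a first-order operation, and the trace of the first-order derivatives of an $H^{s+1}(\Omega)$ function is well defined precisely under the hypothesis $s>\tfrac12$, which is exactly where the restriction on $s$ in the statement enters. Beyond carefully matching these indices I do not anticipate any serious obstacle.
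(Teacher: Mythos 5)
Your argument is correct and is precisely the one the paper intends: Corollary~\ref{ch4corcompact} is presented as a direct consequence of the one-order smoothing of $\mathcal{R}$ in Theorem~\ref{ch4thmUR}, followed by the boundedness of the trace and conormal-derivative maps and the Rellich compact embedding, exactly as in your three factorizations. The index bookkeeping (in particular the role of $s>\tfrac12$ in giving the trace of $\partial w/\partial x_i \in H^{s}(\Omega)$ a meaning in $H^{s-\frac12}(S)$) is handled correctly, so there is nothing to add.
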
   

\begin{theorem}\label{ch4thmappingVW} Let $s\in \mathbb{R}$. Then, the following operators are continuous:
\begin{align*}
V: H^{s}(S) \longrightarrow H^{s+\frac{3}{2}}(\Omega),\\
W: H^{s}(S) \longrightarrow H^{s+\frac{1}{2}}(\Omega).
\end{align*}
\end{theorem}

\begin{theorem} \label{ch4thmappingDVVW}Let $s\in \mathbb{R}$. Then, the following operators are continuous:
\begin{align*}
\mathcal{V}&: H^{s}(S) \longrightarrow H^{s+1}(S),\\
\mathcal{W}&: H^{s}(S) \longrightarrow H^{s+1}(S),\\
\mathcal{W'}&: H^{s}(S) \longrightarrow H^{s+1}(S),\\
\mathcal{L}^\pm&: H^{s}(S) \longrightarrow H^{s-1}(S).
\end{align*}
\end{theorem}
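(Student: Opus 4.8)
The plan is to reduce every assertion to the already-classical mapping properties of the constant-coefficient (Laplace) boundary integral operators, by invoking the representation formulae \eqref{ch4relDVSL}--\eqref{ch4hatL}. The two ingredients I would assemble first are: (i) the well-known fact that $\mathcal{V}_{\Delta},\mathcal{W}_{\Delta},\mathcal{W'}_{\Delta}$ are pseudodifferential operators of order $-1$ on $S$, hence continuous $H^{s}(S)\to H^{s+1}(S)$, while $\mathcal{L}_{\Delta}$ has order $+1$ and is continuous $H^{s}(S)\to H^{s-1}(S)$ for every $s\in\mathbb{R}$ (these follow from the cited references on the Laplace potentials); and (ii) the observation that, since $a\in\mathcal{C}^{\infty}(\overline{\Omega})$ is positive and $S\in\mathcal{C}^{\infty}$, the restrictions $a|_{S}$, $(1/a)|_{S}$ and $(\partial_{n}\ln a)|_{S}$ all belong to $\mathcal{C}^{\infty}(S)$, so multiplication by any of them is a continuous operator $H^{s}(S)\to H^{s}(S)$ for all real $s$.

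With these two facts in hand, the first three operators are immediate compositions. For $\mathcal{V}$ I would use $\mathcal{V}\rho=\mathcal{V}_{\Delta}(\rho/a)$ from \eqref{ch4relDVSL}: multiplication by $1/a$ keeps $\rho$ in $H^{s}(S)$, and then $\mathcal{V}_{\Delta}$ raises the order by one. For $\mathcal{W}$ I would use \eqref{ch4relDVDL}, treating $\mathcal{W}_{\Delta}\rho$ and $\mathcal{V}_{\Delta}(\rho\,\partial_{n}\ln a)$ separately; both land in $H^{s+1}(S)$, so their difference does too. The operator $\mathcal{W}'$ via \eqref{ch4relTSL} is the threefold composition (multiply by $1/a$, apply $\mathcal{W'}_{\Delta}$, multiply by $a$), each factor being continuous and the middle one gaining one derivative.

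The only case requiring genuine care is $\mathcal{L}^{\pm}$, and this is where I expect the main obstacle. Using \eqref{ch4relTDL}--\eqref{ch4hatL} we have $\mathcal{L}^{\pm}\rho = a\,\mathcal{L}_{\Delta}\rho - a\,T^{\pm}_{\Delta}V_{\Delta}(\rho\,\partial_{n}\ln a)$. The first term is clearly continuous $H^{s}(S)\to H^{s-1}(S)$ by (i)--(ii). For the second term I would invoke the classical jump relation for the conormal derivative of the Laplace single-layer potential, of the form $T^{\pm}_{\Delta}V_{\Delta}=\mp\tfrac{1}{2}I+\mathcal{W'}_{\Delta}$ (with the sign fixed by the chosen orientation of $n^{\pm}$), which shows that $T^{\pm}_{\Delta}V_{\Delta}$ is continuous $H^{s}(S)\to H^{s}(S)$; multiplying the input by the smooth factor $\partial_{n}\ln a$ and the output by $a$ preserves this mapping. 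Hence the correction term sends $H^{s}(S)$ into $H^{s}(S)\subset H^{s-1}(S)$, and adding it to the first term yields continuity into $H^{s-1}(S)$, as claimed. The delicate point is precisely the borderline (order-zero) nature of $T^{\pm}_{\Delta}V_{\Delta}$ together with the sidedness of the trace: once the jump relation is quoted with the correct convention, the difference between the $+$ and $-$ versions affects only the zeroth-order term and not the target space, so both $\mathcal{L}^{+}$ and $\mathcal{L}^{-}$ satisfy the same continuity statement.
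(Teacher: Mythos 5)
Your proposal is correct and follows essentially the same route as the paper, which proves Theorems \ref{ch4thmUR}--\ref{ch4thinvV} precisely by combining the relations \eqref{ch4relP}--\eqref{ch4hatL} with the well-known mapping properties of the constant-coefficient (Laplace) operators, smooth-coefficient multiplication being harmless on $H^{s}(S)$. Your added care with $\mathcal{L}^{\pm}$ via the jump relation for $T^{\pm}_{\Delta}V_{\Delta}$ (whose sign convention is immaterial for continuity) just makes explicit what the paper leaves as routine.
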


\begin{theorem}\label{ch4thjumps} Let  $\rho\in H^{-\frac{1}{2}}(S)$, $\tau\in H^{\frac{1}{2}}(S)$. Then the following operators jump relations hold:
\begin{align*}
\gamma^{\pm}V\rho&=\mathcal{V}\rho,\\
\gamma^{\pm}W\tau&=\mp\dfrac{1}{2}\tau+\mathcal{W}\tau,\\
T^{\pm}V\rho&=\pm\dfrac{1}{2}\rho+\mathcal{W'}\rho.
\end{align*}
\end{theorem}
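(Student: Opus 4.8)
The plan is to derive each of the three jump relations from its well-known counterpart for the Laplace operator, using the representation formulae \eqref{ch4relSL}--\eqref{ch4relTSL} that express the variable-coefficient potentials through the constant-coefficient ones. Throughout I would rely on the classical harmonic jump relations, namely $\gamma^{\pm}V_{\Delta}\sigma = \mathcal{V}_{\Delta}\sigma$, $\gamma^{\pm}W_{\Delta}\sigma = \mp\frac{1}{2}\sigma + \mathcal{W}_{\Delta}\sigma$ and $T^{\pm}_{\Delta}V_{\Delta}\sigma = \pm\frac{1}{2}\sigma + \mathcal{W'}_{\Delta}\sigma$, together with the mapping properties of Theorems \ref{ch4thmappingVW} and \ref{ch4thmappingDVVW}, which guarantee that every trace and conormal derivative appearing below is well defined for $\rho\in H^{-\frac{1}{2}}(S)$ and $\tau\in H^{\frac{1}{2}}(S)$.

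For the single layer, I would apply the trace operator to \eqref{ch4relSL}: since $\rho/a\in H^{-\frac{1}{2}}(S)$, the classical single-layer jump gives $\gamma^{\pm}V\rho = \gamma^{\pm}V_{\Delta}(\rho/a) = \mathcal{V}_{\Delta}(\rho/a)$, and invoking \eqref{ch4relDVSL} identifies the right-hand side with $\mathcal{V}\rho$; in particular the two one-sided traces coincide, so $V\rho$ carries no jump across $S$. For the double layer, taking the trace of \eqref{ch4relDL} and using the harmonic jump relations for both $W_{\Delta}$ and $V_{\Delta}$ yields $\gamma^{\pm}W\tau = \mp\frac{1}{2}\tau + \mathcal{W}_{\Delta}\tau - \mathcal{V}_{\Delta}(\tau\,\frac{\partial \ln a}{\partial n})$, where $\tau\,\frac{\partial \ln a}{\partial n}\in H^{\frac{1}{2}}(S)$ because $a$ is smooth and positive; the last two terms are precisely $\mathcal{W}\tau$ by \eqref{ch4relDVDL}.

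The conormal-derivative relation for the single layer is the step I expect to require the most care. The key observation is that the variable-coefficient conormal derivative factors as $T^{\pm}_{x} = a(x)\,T^{\pm}_{\Delta,x}$ when acting on a fixed function, so that applying $T^{\pm}$ to $V\rho = V_{\Delta}(\rho/a)$ gives $T^{\pm}V\rho = a\,T^{\pm}_{\Delta}V_{\Delta}(\rho/a)$. The classical jump relation for $T^{\pm}_{\Delta}V_{\Delta}$ then produces $a\bigl(\pm\frac{1}{2}\,\rho/a + \mathcal{W'}_{\Delta}(\rho/a)\bigr) = \pm\frac{1}{2}\rho + a\,\mathcal{W'}_{\Delta}(\rho/a)$, and \eqref{ch4relTSL} rewrites the last term as $\mathcal{W'}\rho$. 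The subtlety here is twofold: one must justify that the canonical conormal derivative of $V_{\Delta}(\rho/a)$ is well defined for $\rho/a$ only in $H^{-\frac{1}{2}}(S)$, which follows since $V_{\Delta}(\rho/a)\in H^{1,0}(\Omega;\Delta)$ is harmonic off $S$, and one must verify that the scalar factor $a(x)$ may indeed be pulled through the conormal derivative, i.e. that differentiating $V_{\Delta}(\rho/a)$ and only afterwards multiplying by $a$ reproduces the defining formula of $T^{\pm}$. Once these two points are settled the three identities follow at once.
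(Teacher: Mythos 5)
Your proposal is correct and follows exactly the route the paper intends: the paper gives no detailed proof of Theorem \ref{ch4thjumps}, stating only that the jump relations follow from relations \eqref{ch4relP}--\eqref{ch4hatL} and the well-known properties of the constant-coefficient (Laplace) potentials, which is precisely your argument via \eqref{ch4relSL}, \eqref{ch4relDVSL}, \eqref{ch4relDL}, \eqref{ch4relDVDL} and \eqref{ch4relTSL}. Your extra care with the factorization $T^{\pm}=a\,T^{\pm}_{\Delta}$ (justified since $V_{\Delta}(\rho/a)$ is harmonic off $S$, hence lies in $H^{1,0}(\Omega;\mathcal{A})$, so the canonical conormal derivative is well defined) is a sound filling-in of a detail the paper leaves implicit.
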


\begin{theorem}\label{ch4thcompVW} Let $s\in \mathbb{R}$, let $S_{1}$ and $S_{2}$ be two non-empty manifolds with smooth boundaries, $\partial S_{1}$ and $\partial S_{2}$, respectively. Then, the following operators 
\begin{align*}
r_{S_{2}}\mathcal{V}: \widetilde{H}^{s}(S_{1}) \longrightarrow H^{s}(S_{2}),\\
r_{S_{2}}\mathcal{W}: \widetilde{H}^{s}(S_{1}) \longrightarrow H^{s}(S_{2}),\\
r_{S_{2}}\mathcal{W}':\widetilde{H}^{s}(S_{1}) \longrightarrow H^{s}(S_{2}).
\end{align*}
are compact.
\end{theorem}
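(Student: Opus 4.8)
The plan is to factor each restricted operator through a Sobolev space on $S_2$ of one order higher and then invoke a Rellich-type compact embedding. I would begin from Theorem~\ref{ch4thmappingDVVW}, which asserts that $\mathcal{V},\mathcal{W},\mathcal{W}'$ act continuously from $H^s(S)$ into $H^{s+1}(S)$; in other words, each of these operators is of order $-1$ on the closed smooth manifold $S$. Writing $\mathcal{K}$ for any one of $\mathcal{V},\mathcal{W},\mathcal{W}'$, I would compose this mapping with the continuous inclusion $\widetilde{H}^s(S_1)\hookrightarrow H^s(S)$ (bounded by the characterization of $\widetilde{H}^s$ given in the preliminaries) and with the restriction operator $r_{S_2}:H^{s+1}(S)\to H^{s+1}(S_2)$, which is continuous. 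This yields the chain
\[
\widetilde{H}^{s}(S_1)\hookrightarrow H^{s}(S)\xrightarrow{\ \mathcal{K}\ }H^{s+1}(S)\xrightarrow{\ r_{S_2}\ }H^{s+1}(S_2),
\]
showing that $r_{S_2}\mathcal{K}:\widetilde{H}^{s}(S_1)\to H^{s+1}(S_2)$ is bounded.

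The final step is to observe that the target of the theorem is $H^{s}(S_2)$, one order lower than $H^{s+1}(S_2)$. I would therefore append the embedding $H^{s+1}(S_2)\hookrightarrow H^{s}(S_2)$ and argue that it is compact: since $S_2$ is a bounded submanifold of the closed compact manifold $S$ with smooth boundary $\partial S_2$, the Rellich--Kondrachov theorem for Bessel potential spaces on compact manifolds with smooth boundary guarantees compactness of this embedding. Because the composition of a bounded operator with a compact one is compact, it follows that
\[
r_{S_2}\mathcal{K}:\widetilde{H}^{s}(S_1)\xrightarrow{\ \text{bounded}\ }H^{s+1}(S_2)\xrightarrow{\ \text{compact}\ }H^{s}(S_2)
\]
is compact, which is precisely the assertion for each of the three operators.

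The main obstacle, such as it is, lies not in the algebra of the composition but in justifying the compact embedding on a manifold \emph{with boundary}: one must verify that $H^{s+1}(S_2)$ and $H^{s}(S_2)$ are the correct restriction spaces (the images of $H^{s+1}(S)$ and $H^{s}(S)$ under $r_{S_2}$), and then invoke the standard compactness of $H^{t+\varepsilon}\hookrightarrow H^{t}$ for $\varepsilon>0$ on such spaces; this is exactly where the smoothness hypotheses $S\in\mathcal{C}^\infty$ and $\partial S_2\in\mathcal{C}^\infty$ enter. I note that the one-order smoothing furnished by Theorem~\ref{ch4thmappingDVVW} provides precisely the gap $\varepsilon=1$ needed to make the embedding compact, so no sharper mapping estimate for $\mathcal{V},\mathcal{W},\mathcal{W}'$ is required.
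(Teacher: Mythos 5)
Your proof is correct, but it follows a somewhat different route from the paper's. The paper gives no separate argument for this theorem: it is covered by the blanket remark preceding Theorems \ref{ch4thmUR}--\ref{ch4thinvV} that all these results follow, via the relations \eqref{ch4relDVSL}, \eqref{ch4relDVDL}, \eqref{ch4relTSL}, from the known properties of the Laplace-based operators established in \cite{mikhailov1}. In that route one writes, e.g., $r_{S_{2}}\mathcal{V}\rho = r_{S_{2}}\mathcal{V}_{\Delta}(\rho/a)$ and $r_{S_{2}}\mathcal{W}\rho = r_{S_{2}}\mathcal{W}_{\Delta}\rho - r_{S_{2}}\mathcal{V}_{\Delta}\left(\rho\,\partial\ln a/\partial n\right)$, observes that multiplication by the smooth functions $1/a$, $a$, $\partial \ln a/\partial n$ is bounded on $\widetilde{H}^{s}(S_{1})$ (it preserves supports), and then cites the compactness of $r_{S_{2}}\mathcal{V}_{\Delta}$, $r_{S_{2}}\mathcal{W}_{\Delta}$, $r_{S_{2}}\mathcal{W}'_{\Delta}$ from \cite{mikhailov1}. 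Your argument instead stays entirely with the variable-coefficient operators, combining the order $-1$ smoothing of Theorem \ref{ch4thmappingDVVW} with a Rellich-type embedding; this is more self-contained (its only external input is the compact embedding), while the paper's route is uniform with how it handles every other potential-theoretic statement --- and the two are ultimately cousins, since the compactness results in \cite{mikhailov1} are themselves obtained by exactly your smoothing-plus-embedding mechanism. One simplification worth noting: you can sidestep the only delicate point you flag, namely the compact embedding on the manifold $S_{2}$ \emph{with boundary}, by placing the compact embedding on the closed manifold $S$ before restricting, i.e.\ factoring
\[
\widetilde{H}^{s}(S_{1})\hookrightarrow H^{s}(S)\xrightarrow{\ \mathcal{K}\ }H^{s+1}(S)\hookrightarrow H^{s}(S)\xrightarrow{\ r_{S_{2}}\ }H^{s}(S_{2}),
\]
where $H^{s+1}(S)\hookrightarrow H^{s}(S)$ is compact by the standard Rellich theorem on a closed compact $\mathcal{C}^{\infty}$ manifold (valid for every $s\in\mathbb{R}$) and $r_{S_{2}}$ is bounded by the very definition of the restriction space; then no extension operator and no boundary regularity of $S_{2}$ enter the argument at all.
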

\begin{theorem}\label{ch4thinvV} Let $S_{1}$ be a non-empty simply connected submanifold of $S$ with infinitely smooth boundary curve, and $0<s<1$. Then, the operators
\begin{align*}
r_{S_{1}}\mathcal{V}&: \widetilde{H}^{s-1}(S_{1}) \longrightarrow H^{s}(S_{1}),\\
\mathcal{V}&: H^{s-1}(S) \longrightarrow H^{s}(S),
\end{align*}
are invertible.
\end{theorem}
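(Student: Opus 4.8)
The plan is to exploit the factorization \eqref{ch4relDVSL}, namely $\mathcal{V}\rho=\mathcal{V}_{\Delta}(\rho/a)$, which reduces the invertibility of the variable-coefficient operator to that of its constant-coefficient (Laplace) counterpart composed with multiplication by the smooth positive function $1/a$. Since $a\in\mathcal{C}^{\infty}(\overline{\Omega})$ is bounded below by a positive constant, both $a|_{S}$ and $1/a|_{S}$ are smooth nonvanishing multipliers on $S$. I will use the standard fact that multiplication by such a function is a bounded linear bijection of $H^{s-1}(S)$ onto itself, with bounded inverse given by multiplication by $a$; moreover, since multiplication preserves supports, it also restricts to an isomorphism of $\widetilde{H}^{s-1}(S_{1})$ onto itself. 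Denote by $M_{1/a}$ and $M_{a}$ these two mutually inverse multiplication operators.

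First, for the operator on the full boundary, I would recall the classical invertibility of the Laplace single-layer potential operator $\mathcal{V}_{\Delta}:H^{s-1}(S)\longrightarrow H^{s}(S)$ for $0<s<1$ (see, e.g., \cite{mclean, costabel}); in three dimensions this is a positive-definite, strongly elliptic pseudodifferential operator of order $-1$, hence an isomorphism on the whole Bessel-potential scale. Writing $\mathcal{V}=\mathcal{V}_{\Delta}\circ M_{1/a}$ then exhibits $\mathcal{V}$ as a composition of two isomorphisms, with explicit inverse $M_{a}\circ\mathcal{V}_{\Delta}^{-1}$. This settles the second assertion.

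For the localised operator, the same factorization gives $r_{S_{1}}\mathcal{V}\rho=r_{S_{1}}\mathcal{V}_{\Delta}(\rho/a)$ for $\rho\in\widetilde{H}^{s-1}(S_{1})$. Since $M_{1/a}$ is an isomorphism of $\widetilde{H}^{s-1}(S_{1})$, it suffices to invoke the invertibility of the restricted Laplace single-layer operator $r_{S_{1}}\mathcal{V}_{\Delta}:\widetilde{H}^{s-1}(S_{1})\longrightarrow H^{s}(S_{1})$ on the open submanifold $S_{1}$ for $0<s<1$. Then $r_{S_{1}}\mathcal{V}=r_{S_{1}}\mathcal{V}_{\Delta}\circ M_{1/a}$ is again a composition of isomorphisms, its inverse being $M_{a}\circ(r_{S_{1}}\mathcal{V}_{\Delta})^{-1}$.

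The main obstacle is establishing the invertibility of the constant-coefficient operators themselves on the indicated Sobolev scale, and in particular that of $r_{S_{1}}\mathcal{V}_{\Delta}$ on the open patch $S_{1}$: this is the genuinely nontrivial ingredient, resting on the positive-definiteness and strong ellipticity of the three-dimensional single-layer operator together with the restriction $0<s<1$, which delimits precisely the range in which the pairing of the $\widetilde{H}^{s-1}(S_{1})$- and $H^{s}(S_{1})$-spaces on the open surface remains well-behaved. Once these classical facts are cited, the variable-coefficient statement follows immediately from the multiplicative factorization, with no further analysis required.
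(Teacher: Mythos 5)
Your proposal is correct and follows essentially the same route as the paper: both factor $\mathcal{V}=\mathcal{V}_{\Delta}\circ M_{1/a}$ via relation \eqref{ch4relDVSL} and reduce invertibility to the classical invertibility of the constant-coefficient operators $\mathcal{V}_{\Delta}$ and $r_{S_{1}}\mathcal{V}_{\Delta}$ (which the paper cites from Costabel--Stephan and Chkadua--Mikhailov--Natroshvili, Part II). Your write-up is merely more explicit than the paper's about the multiplication operator $M_{1/a}$ being an isomorphism that preserves the support condition defining $\widetilde{H}^{s-1}(S_{1})$, which is a point the paper leaves implicit.
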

\begin{proof}
Relation \eqref{ch4relSL} gives $\mathcal{V}g = \mathcal{V}_{\Delta}g^{*}$, where $g = g^{*}/a$. The invertibility of $\mathcal{V}$ then follows from the invertibility of $\mathcal{V}_{\Delta}$, see references \cite[Theorem 2.4]{costste} and \cite[Theorem 3.5]{miksolandreg}.
\end{proof}

\begin{theorem}\label{ch4thinvL} Let $S_{1}$ be a non-empty simply connected submanifold of $S$ with infinitely smooth boundary curve, and $0<s<1$. Then, the operator
\begin{align*}
r_{S_{1}}\widehat{\mathcal{L}}: \widetilde{H}^{s}(S_{1}) \longrightarrow H^{s-1}(S_{1}),
\end{align*}
is invertible whilst the operators
\begin{align*}
r_{S_{1}}(\mathcal{L}^{\pm}-\widehat{\mathcal{L}}): \widetilde{H}^{s}(S_{1}) \longrightarrow H^{s-1}(S_{1}),
\end{align*}
are compact.
\end{theorem}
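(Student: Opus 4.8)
The plan is to reduce everything to the corresponding statements for the Laplace operator via the relation \eqref{ch4hatL}, namely $\widehat{\mathcal{L}}\rho = a\mathcal{L}_{\Delta}\rho$, and the decomposition \eqref{ch4relTDL}. First I would treat the invertibility of $r_{S_{1}}\widehat{\mathcal{L}}$. Since multiplication by the smooth positive function $a\in\mathcal C^{\infty}(\overline\Omega)$ is an isomorphism on each of the spaces $H^{s-1}(S_{1})$ (indeed $a$ and $1/a$ are both smooth multipliers, and multiplication by a smooth nonvanishing function preserves support, hence acts on $\widetilde H$ and $H$ scales alike), the operator $r_{S_{1}}\widehat{\mathcal{L}} = (a\cdot) \, r_{S_{1}}\mathcal{L}_{\Delta}$ is invertible on $\widetilde H^{s}(S_{1})\to H^{s-1}(S_{1})$ if and only if $r_{S_{1}}\mathcal{L}_{\Delta}$ is. The latter is the hypersingular operator of the Laplacian on the submanifold $S_{1}$, whose invertibility for $0<s<1$ is classical; I would cite the same sources used for $\mathcal V_{\Delta}$ (e.g. \cite{costste,miksolandreg}) or the standard Steklov--Poincaré/hypersingular theory.

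Next I would establish compactness of $r_{S_{1}}(\mathcal{L}^{\pm}-\widehat{\mathcal{L}})$. By \eqref{ch4relTDL}, the difference is exactly
\begin{equation*}
\mathcal{L}^{\pm}\rho - \widehat{\mathcal{L}}\rho = -\,a\,T^{\pm}_{\Delta}V_{\Delta}\!\left(\rho\,\frac{\partial \ln a}{\partial n}\right).
\end{equation*}
The key observation is that, by the Liapunov--Tauber theorem for the Laplacian (stated in the excerpt just after \eqref{ch4hatL}), the conormal derivative of the single-layer potential has no jump in its principal part: $T^{\pm}_{\Delta}V_{\Delta}\tau = \pm\tfrac12\tau + \mathcal{W}_{\Delta}'\tau$, so the operator $T^{\pm}_{\Delta}V_{\Delta}$ equals $\pm\tfrac12 I + \mathcal{W}_{\Delta}'$. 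Composing with multiplication by the smooth function $\partial\ln a/\partial n$ (which gains no smoothness but is bounded) and with the smoothing operators at hand, I would show the whole expression maps $\widetilde H^{s}(S_{1})$ into a space that embeds compactly into $H^{s-1}(S_{1})$. Concretely, $\mathcal W'_{\Delta}$ (the $a\equiv1$ analogue of $\mathcal W'$) is smoothing by one order as in Theorem \ref{ch4thmappingDVVW}, and the restriction operators between $\widetilde H$ and $H$ scales give the extra compactness needed exactly as in Theorem \ref{ch4thcompVW}.

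The main obstacle is handling the identity term $\pm\tfrac12 I$ that appears inside $T^{\pm}_{\Delta}V_{\Delta}$: multiplication by $\partial\ln a/\partial n$ followed by the identity is \emph{not} itself compact on a fixed Sobolev scale, so a naive splitting fails. The resolution is that the single-layer density fed into $T^{\pm}_{\Delta}V_{\Delta}$ is $\rho\,\partial\ln a/\partial n$ with $\rho\in\widetilde H^{s}(S_{1})$, i.e. one derivative \emph{more} regular than the natural domain $H^{s-1}$ of the single-layer conormal map; combined with the fact that $V_{\Delta}$ raises regularity (Theorem \ref{ch4thmappingVW}) and that $T^{\pm}_{\Delta}V_{\Delta}$ loses only the order gained, the composition $r_{S_{1}}\,a\,T^{\pm}_{\Delta}V_{\Delta}(\,\cdot\,\partial\ln a/\partial n)$ actually maps $\widetilde H^{s}(S_{1})$ continuously into $H^{s}(S_{1})$, which embeds compactly into $H^{s-1}(S_{1})$ on the bounded manifold $S_{1}$. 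I would verify this regularity bookkeeping carefully, using that $\mathcal W'_{\Delta}:\widetilde H^{s}(S_1)\to H^{s}(S_1)$ and that the order-raising of $V_{\Delta}$ compensates the identity contribution, and then invoke the compact Sobolev embedding $H^{s}(S_{1})\hookrightarrow H^{s-1}(S_{1})$ to conclude.
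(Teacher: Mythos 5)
Your proposal is correct and follows essentially the same route as the paper: invertibility of $r_{S_{1}}\widehat{\mathcal{L}}$ is reduced via $\widehat{\mathcal{L}}=a\mathcal{L}_{\Delta}$ to the classical invertibility of the Laplace hypersingular operator (same citations), and compactness of $r_{S_{1}}(\mathcal{L}^{\pm}-\widehat{\mathcal{L}})$ is obtained from \eqref{ch4relTDL} together with the jump relation $T^{\pm}_{\Delta}V_{\Delta}=\pm\tfrac{1}{2}I+\mathcal{W}'_{\Delta}$, showing the difference maps $\widetilde{H}^{s}(S_{1})$ into $H^{s}$ and then invoking the compact embedding $H^{s}(S)\subset H^{s-1}(S)$ --- exactly the paper's identity $\mathcal{L}^{\pm}\rho-\widehat{\mathcal{L}}\rho=a\bigl(\mp\tfrac{1}{2}I-\mathcal{W}'_{\Delta}\bigr)\bigl(\rho\,\partial\ln a/\partial n\bigr)$ and subsequent argument. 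The ``obstacle'' you flag around the identity term is not really an obstacle in this setting, since the target space is $H^{s-1}(S_{1})$ rather than $H^{s}(S_{1})$; your own resolution (the image lies in $H^{s}$, one order better than the target) is precisely the paper's reasoning.
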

\begin{proof}
Relation \eqref{ch4relTDL} gives
\[ \widehat{\mathcal{L}}\rho = \mathcal{L}^{\pm}\rho + aT^{+}_\Delta V_{\Delta}\left(\rho\frac{\partial \ln a}{\partial n}\right) = \mathcal{L}^{\pm}\rho + aT^{-}_\Delta V_{\Delta}\left(\rho\frac{\partial \ln a}{\partial n}\right).   \] 

Take into account $\widehat{\mathcal{L}}\rho := a\mathcal{L}_{\Delta}\rho$ and the invertibility of the operator $\mathcal{L}_{\Delta}$, see references  \cite[Theorem 2.4]{costste} and \cite[Theorem 3.6]{miksolandreg}; we deduce the invertibility of the operator $\widehat{\mathcal{L}}$. To prove the compactness properties, we consider the identity:
\[ \mathcal{L}^{\pm}\rho - \widehat{\mathcal{L}}\rho =  a\left(\mp \dfrac{1}{2}I - \mathcal{W}'_{\Delta}\right)\left(\rho\frac{\partial \ln a}{\partial n}\right).  \]
Since $\rho\in \widetilde{H}^{s}(S_{1})$, due to the mapping properties of the operator $\mathcal{W}'$, $\mathcal{L}^{\pm} - \rho\widehat{\mathcal{L}}\rho\in H^{s}$. Then, immediately follows from the compact embedding $H^{s}(S)\subset H^{s-1}(S)$, that the operators 
\begin{align*}
r_{S_{1}}(\mathcal{L}^{\pm}-\widehat{\mathcal{L}}): \widetilde{H}^{s}(S_{1}) \longrightarrow H^{s-1}(S_{1}),
\end{align*}
are compact. 
\end{proof}

\section{Third Green identities and integral relations}

In this section we provide the results similar to the ones in \cite{mikhailov1} but for our, different, parametrix \eqref{ch4Px}.  
  
Let  $u,v\in H^{1,0}(\Omega;\mathcal{A})$.   Subtracting from the first Green identity \eqref{ch4green1.1} its counterpart with the swapped $u$ and $v$, we arrive at the second Green identity, see e.g. \cite{mclean},
\begin{equation}\label{ch4green2}
\displaystyle\int_{\Omega}\left[u\,\mathcal{A}v - v\,\mathcal{A}u\right]dx= \int_{S}\left[u\, T^{+}v \,-\,v\, T^{+}u\,\right]dS(x). 
\end{equation}
Taking now $v(x):=P(x,y)$, we obtain from \eqref{ch4green2} by the standard limiting procedures (cf. \cite{miranda}) the third Green identity  for any function $u\in H^{1,0}(\Omega;\mathcal{A})$:
\begin{equation}\label{ch4green3}
u+\mathcal{R}u-VT^{+}u+W\gamma^{+}u=\mathcal{P}\mathcal{A}u,\hspace{1em}\text{in}\hspace{0.2em}\Omega.
\end{equation}

If $u\in H^{1,0}(\Omega; \mathcal{A})$ is a solution of the partial differential equation \eqref{ch4BVP1}, then, from \eqref{ch4green3} we obtain:

\begin{equation}\label{ch43GV}
u+\mathcal{R}u-VT^{+}u+W\gamma^{+}u=\mathcal{P}f,\hspace{0.5em}in\hspace{0.2em}\Omega;
\end{equation}
\begin{equation}\label{ch43GG}
\dfrac{1}{2}\gamma^{+}u+\gamma^{+}\mathcal{R}u-\mathcal{V}T^{+}u+\mathcal{W}\gamma^{+}u=\gamma^{+}\mathcal{P}f,\hspace{0.5em}on\hspace{0.2em}S.
\end{equation}
%\begin{equation}\label{ch43GT}
%\dfrac{1}{2}T^{+}u+T^{+}\mathcal{R}u-\mathcal{W'}T^{+}u+\mathcal{L}^{+}\gamma^{+}u=T^{+}\mathcal{P}f,\hspace{0.5em}on\hspace{0.2em}S.
%\end{equation}

For some distributions $f$, $\Psi$ and $\Phi$, we consider a more general, indirect integral relation associated with the third Green identity \eqref{ch43GV}:

\begin{equation}\label{ch4G3ind}
u+\mathcal{R}u-V\Psi+W\Phi=\mathcal{P}f,\hspace{0.5em}{\rm in\ }\Omega.
\end{equation}

\begin{lemma}\label{ch4lema1}Let $u\in H^{1}(\Omega)$, $f\in L_{2}(\Omega)$, $\Psi\in H^{-\frac{1}{2}}(S)$ and $\Phi\in H^{\frac{1}{2}}(S)$ satisfying the relation \eqref{ch4G3ind}. Then $u$ belongs to $H^{1,0}(\Omega, \mathcal{A})$; solves the equation $\mathcal{A}u=f$ in $\Omega$, and the following identity is satisfied,
\begin{equation}\label{ch4lema1.0}
V(\Psi- T^{+}u) - W(\Phi- \gamma^{+}u) = 0\hspace{0.5em}\text{in}\hspace{0.5em}\Omega.
\end{equation}
\end{lemma}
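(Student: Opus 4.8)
The plan is to apply the operator $\mathcal{A}$, understood in the distributional sense on $\Omega$, to both sides of the indirect relation \eqref{ch4G3ind}, and to show that the right-hand side collapses to $f$. This will simultaneously establish that $\mathcal{A}u\in L^2(\Omega)$ (so that $u\in H^{1,0}(\Omega;\mathcal{A})$) and that $\mathcal{A}u=f$, after which the identity \eqref{ch4lema1.0} drops out of a comparison with the third Green identity \eqref{ch4green3}.

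First I would record the action of $\mathcal{A}$ on each potential appearing in \eqref{ch4G3ind}, writing $\mathcal{A}w=a\Delta w+\nabla a\cdot\nabla w$. By the mapping properties of Theorems \ref{ch4thmUR} and \ref{ch4thmappingVW} we have $\mathcal{P}f,\mathcal{R}u\in H^{2}(\Omega)$ and $V\Psi,W\Phi\in H^{1}(\Omega)$, so each term has a well-defined image. Using relations \eqref{ch4relSL} and \eqref{ch4relDL} together with the fact that the Laplace single- and double-layer potentials are harmonic off $S$, I get $\Delta(V\Psi)=\Delta(W\Phi)=0$ in the open set $\Omega$, whence $\mathcal{A}(V\Psi)=\nabla a\cdot\nabla(V\Psi)$ and $\mathcal{A}(W\Phi)=\nabla a\cdot\nabla(W\Phi)$; from \eqref{ch4relP} and $\Delta\mathcal{P}_\Delta g=g$ I obtain $\mathcal{A}(\mathcal{P}f)=f+\nabla a\cdot\nabla(\mathcal{P}f)$.

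The key computation, and the step I expect to be the main obstacle, is the image of the remainder potential $\mathcal{R}u$. Starting from the explicit representation \eqref{ch4relR} and again using $\Delta\mathcal{P}_\Delta g=g$ in $\Omega$, I would show $\Delta(\mathcal{R}u)=\nabla\ln a\cdot\nabla u$, so that $\mathcal{A}(\mathcal{R}u)=\nabla a\cdot\nabla u+\nabla a\cdot\nabla(\mathcal{R}u)$. This is precisely the identity through which the specific form of the new parametrix's remainder enters, and it is what makes the gradient terms cancel. Assembling $\mathcal{A}u=\mathcal{A}(\mathcal{P}f)-\mathcal{A}(\mathcal{R}u)+\mathcal{A}(V\Psi)-\mathcal{A}(W\Phi)$ and collecting the terms carrying $\nabla a$, everything organizes into $\nabla a\cdot\nabla[\mathcal{P}f-u-\mathcal{R}u+V\Psi-W\Phi]$, whose bracket vanishes identically by \eqref{ch4G3ind}. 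Hence $\mathcal{A}u=f$, and in particular $\mathcal{A}u\in L^{2}(\Omega)$, so $u\in H^{1,0}(\Omega;\mathcal{A})$.

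Finally, since $u\in H^{1,0}(\Omega;\mathcal{A})$, the third Green identity \eqref{ch4green3} is valid for $u$, giving $u+\mathcal{R}u-VT^{+}u+W\gamma^{+}u=\mathcal{P}\mathcal{A}u=\mathcal{P}f$, where the last equality uses $\mathcal{A}u=f$. Subtracting \eqref{ch4G3ind} from this identity cancels the $u+\mathcal{R}u$ and $\mathcal{P}f$ terms and leaves exactly $V(\Psi-T^{+}u)-W(\Phi-\gamma^{+}u)=0$, which is \eqref{ch4lema1.0}. The one point I would verify carefully is that the distributional manipulations $\mathcal{A}(V\Psi)=\nabla a\cdot\nabla(V\Psi)$ and $\mathcal{A}(W\Phi)=\nabla a\cdot\nabla(W\Phi)$ are legitimate given only $\Psi\in H^{-1/2}(S)$ and $\Phi\in H^{1/2}(S)$; this is guaranteed by the interior smoothness (harmonicity) of the layer potentials away from $S$, so that the second-order term $a\Delta(\cdot)$ genuinely vanishes in $\Omega$ and only the $L^{2}$ gradient term survives.
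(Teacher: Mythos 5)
Your proposal is correct, and your final step (subtracting \eqref{ch4G3ind} from the third Green identity once $u\in H^{1,0}(\Omega;\mathcal{A})$ and $\mathcal{A}u=f$ are known) coincides with the paper's. Where you genuinely diverge is in how $\mathcal{A}u=f$ is obtained. You apply $\mathcal{A}$ term by term to \eqref{ch4G3ind}; the crux is the explicit identity $\Delta(\mathcal{R}u)=\nabla\ln a\cdot\nabla u$, which indeed follows from \eqref{ch4relR} and $\Delta\mathcal{P}_{\Delta}g=g$ in $\Omega$, after which the first-order terms reassemble into $\nabla a\cdot\nabla$ of a bracket that vanishes by \eqref{ch4G3ind} itself. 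This yields $u\in H^{1,0}(\Omega;\mathcal{A})$ and $\mathcal{A}u=f$ in one stroke, with no use of the Green identity at that stage. The paper splits the work differently: it first shows only that $\mathcal{A}u\in L^{2}(\Omega)$, for which the mapping property $\mathcal{R}:H^{1}(\Omega)\to H^{2}(\Omega)$ from Theorem \ref{ch4thmUR} suffices (the explicit form of $\Delta\mathcal{R}u$ is never needed), combined with harmonicity of $V_{\Delta}$, $W_{\Delta}$ and $\Delta\mathcal{P}_{\Delta}(f/a)=f/a$; then, with the third Green identity \eqref{ch4green3} now available for $u$, it subtracts it from \eqref{ch4G3ind} to get $W(\gamma^{+}u-\Phi)-V(T^{+}u-\Psi)=\mathcal{P}(\mathcal{A}u-f)$, rewrites both sides via \eqref{ch4relP}, \eqref{ch4relSL}, \eqref{ch4relDL}, and applies the Laplacian: the layer-potential side is harmonic, so $(\mathcal{A}u-f)/a=0$. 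That same subtracted identity then yields \eqref{ch4lema1.0} by substitution, doing double duty, whereas you re-derive the subtraction afterwards. Your route buys a self-contained computation that makes visible exactly why the remainder of this particular family of parametrices forces the cancellation of the $\nabla a$ terms; the paper's route buys economy, avoiding the computation of $\Delta\mathcal{R}u$ altogether and reusing one identity both for the PDE and for \eqref{ch4lema1.0}. Both arguments rest on the same ingredients (the relations to the constant-coefficient potentials, interior harmonicity of the layer potentials, and the third Green identity), so yours is a valid, slightly more computational alternative.
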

\begin{proof}
First, let us prove that $u\in H^{1,0}(\Omega;\mathcal{A})$. Since $u\in H^{1}(\Omega)$ by hypothesis, it suffices to prove that $\mathcal{A}u \in L^{2}(\Omega)$. Let us thus take equation \eqref{ch4G3ind} and apply the relations \eqref{ch4relP}, \eqref{ch4relSL} and \eqref{ch4relDL} to obtain
\begin{align}\label{ch4lema1.1}
u=&\,\mathcal{P}f-\mathcal{R}u+V\Psi-W\Phi\nonumber\\
=&\,\mathcal{P}_{\Delta}\left(\dfrac{f}{a}\right)-\mathcal{R}u+V_{\Delta}\left(\dfrac{\Psi}{a}\right)-W_{\Delta}\Phi
+V_{\Delta}\left(\dfrac{\partial\ln a}{\partial n}\,\Phi\right).
\end{align}
We note that $\mathcal{R}u\in H^{2}(\Omega)$ due to the mapping properties given by Theorem \ref{ch4thmUR}. Moreover,  $V_{\Delta}$ and $W_{\Delta}$ in \eqref{ch4lema1.1}   are harmonic potentials, while $\mathcal{P}_{\Delta}$ is the Newtonian potential for the Laplacian, i.e., $\Delta\mathcal{P}_{\Delta}\left(\dfrac{f}{a}\right)=\dfrac{f}{a}$. Consequently,
$
\Delta u = \dfrac{f}{a}-\Delta\mathcal{R}u\in L^{2}(\Omega).
$ 
Hence, $\mathcal{A}u\in L^{2}(\Omega)$ and thus $u\in H^{1,0}(\Omega;\mathcal{A})$.

Since $u\in H^{1,0}(\Omega;\mathcal{A})$, the third Green identity \eqref{ch43GV} is valid for the function $u$, 
and
we proceed subtracting \eqref{ch4green3} from \eqref{ch4G3ind} to obtain
\begin{equation}\label{ch4lema1.3}
W(\gamma^{+}u-\Phi)-V(T^{+}u-\Psi)=\mathcal{P}(\mathcal{A}u-f).
\end{equation}
Let us apply relations \eqref{ch4relP}, \eqref{ch4relSL} and \eqref{ch4relDL} to \eqref{ch4lema1.3}, and then, apply the Laplace operator to both sides. Hence, we obtain
\begin{equation}\label{ch4lema1.5}
\mathcal{A}u-f=0,
\end{equation}
i.e., $u$ solves \eqref{ch4BVP1}.
Finally, substituting \eqref{ch4lema1.5} into \eqref{ch4lema1.3}, we prove \eqref{ch4lema1.0}.
\end{proof}

\begin{lemma}\label{ch4lemma2}
Let $\Psi^{*}\in H^{-\frac{1}{2}}(S)$. If
\begin{equation}\label{ch4lema2i}
V\Psi^{*}(y) = 0,\hspace{2em}y\in\Omega
\end{equation}
then $\Psi^{*}(y) = 0$.
\end{lemma}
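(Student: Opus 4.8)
The plan is to reduce the statement to the injectivity of the direct-value single layer operator $\mathcal{V}$, whose invertibility has already been established in Theorem \ref{ch4thinvV}. First I would observe that since $\Psi^{*}\in H^{-\frac12}(S)$, Theorem \ref{ch4thmappingVW} gives $V\Psi^{*}\in H^{1}(\Omega)$, so that the interior trace $\gamma^{+}V\Psi^{*}\in H^{\frac12}(S)$ is well defined. The hypothesis \eqref{ch4lema2i} says that $V\Psi^{*}$ vanishes identically on the open set $\Omega$, hence its trace from inside vanishes: $\gamma^{+}V\Psi^{*}=0$.

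Next I would invoke the jump relation $\gamma^{\pm}V\rho=\mathcal{V}\rho$ from Theorem \ref{ch4thjumps}, applied to $\rho=\Psi^{*}$, which together with the previous step yields $\mathcal{V}\Psi^{*}=\gamma^{+}V\Psi^{*}=0$. Since $\Psi^{*}\in H^{-\frac12}(S)=H^{s-1}(S)$ with $s=\frac12\in(0,1)$, Theorem \ref{ch4thinvV} guarantees that $\mathcal{V}\colon H^{-\frac12}(S)\to H^{\frac12}(S)$ is invertible, in particular injective, and therefore $\mathcal{V}\Psi^{*}=0$ forces $\Psi^{*}=0$, which is exactly the claim.

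As a self-contained alternative that does not appeal to Theorem \ref{ch4thinvV}, I would argue by potential theory: using \eqref{ch4relSL} write $V\Psi^{*}=V_{\Delta}(\Psi^{*}/a)$ and set $\sigma:=\Psi^{*}/a$, a well-defined element of $H^{-\frac12}(S)$ because $a\in\mathcal{C}^{\infty}(\overline{\Omega})$ is positive. The hypothesis becomes $V_{\Delta}\sigma=0$ in $\Omega^{+}$. The harmonic single layer potential $V_{\Delta}\sigma$ is continuous across $S$, so its exterior trace equals its (vanishing) interior trace; being harmonic in $\Omega^{-}$, vanishing on $S$, and decaying at infinity, it must vanish throughout $\Omega^{-}$ by uniqueness of the exterior Dirichlet problem. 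Consequently both one-sided conormal derivatives vanish, and the conormal jump relation $T^{+}_{\Delta}V_{\Delta}\sigma-T^{-}_{\Delta}V_{\Delta}\sigma=\sigma$ forces $\sigma=0$, whence $\Psi^{*}=a\sigma=0$.

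The bulk of the difficulty is not in this lemma itself but is already absorbed into the invertibility statement of Theorem \ref{ch4thinvV} (equivalently, into the classical invertibility of $\mathcal{V}_{\Delta}$). In the self-contained variant the only genuinely delicate point is the exterior step: one must fix the correct functional framework for $\Omega^{-}$ (e.g.\ weighted Sobolev spaces) so that the decay of the single layer potential at infinity legitimately yields uniqueness of the exterior Dirichlet problem, and one must keep track of the fact that $\sigma\in H^{-\frac12}(S)$ so that the conormal jump relation applies in the correct spaces.
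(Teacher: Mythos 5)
Your first argument is exactly the paper's proof: the paper likewise takes the trace of $V\Psi^{*}=0$, rewrites it via \eqref{ch4relDVSL} as $\mathcal{V}_{\Delta}\left(\Psi^{*}/a\right)=0$, and concludes from the invertibility established in Theorem \ref{ch4thinvV}; invoking the invertibility of $\mathcal{V}$ directly, as you do, is the same step since that invertibility was itself deduced from $\mathcal{V}_{\Delta}$. Your self-contained potential-theoretic alternative is sound but is essentially the classical proof of the invertibility of $\mathcal{V}_{\Delta}$ unrolled, so it adds no new route beyond what Theorem \ref{ch4thinvV} already encapsulates.
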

\begin{proof} Taking the trace of \eqref{ch4lema2i}gives:
\begin{center}
$\mathcal{V}\Psi^{*}(y) = \mathcal{V}_{\triangle}\left(\dfrac{\Psi^{*}}{a}\right)(y) = 0, \hspace{2em}y\in\Omega$,
\end{center}
from where the result follows due to the invertibility of the operator $\mathcal{V}_{\triangle}$ (cf. Lemma  \ref{ch4thinvV}). 
\end{proof}

% This proof can be modified for the case $W\Phi^*=0$ in $\Omega$, if to use Theorem 6 from [Dautray-Lions]. It will imply $\widehat{\Psi} = \dfrac{\partial\ln a}{\partial n}\,\Phi^{*}=0$ and $\Phi^*=const$. But if $\widehat{\Psi}=0$, then $W_\Delta(\Phi^*)=0$ in $\Omega$ and $\Phi^*=$ in $S$ by Lemma 4.2(ii) in \cite{CMN-I}. 

\section{BDIE system for the mixed problem}
We aim to obtain a segregated boundary-domain integral equation system for mixed BVP \eqref{ch4BVP}. To this end, let the functions $\Phi_{0}\in H^{\frac{1}{2}}(S)$ and $\Psi_{0}\in H^{-\frac{1}{2}}(S)$ be respective continuations of the boundary functions 
$\phi_{0}\in H^{\frac{1}{2}}(S_{D})$ and 
$\psi_{0}\in H^{-\frac{1}{2}}(S_{N})$ to the whole $S$.
Let us now represent 
\begin{equation*}\label{ch4gTrepr}
\gamma^{+}u=\Phi_{0} + \phi,\quad\quad 
T^{+}u = \Psi_{0} +\psi,\quad\text{ on }\,\, S, 
\end{equation*}
where  $\phi\in\widetilde{H}^{\frac{1}{2}}(S_{N})$ and $\psi\in\widetilde{H}^{-\frac{1}{2}}(S_{D})$ are unknown boundary functions. 

To obtain one of the possible boundary-domain integral equation systems we employ identity \eqref{ch43GV} in the domain $\Omega$, and identity \eqref{ch43GG} on $S$, substituting there $\gamma^{+}u=\Phi_{0} + \phi$ and $T^{+}u = \Psi_{0} +\psi$ and further considering the unknown functions $\phi$ and $\psi$ as formally independent (segregated) of $u$ in $\Omega$. Consequently, we obtain the following system (M12) of two equations for three unknown functions,
\begin{subequations}
\begin{align}
u+\mathcal{R}u-V\psi+W\phi&=F_{0}\hspace{2em}in\hspace{0.5em}\Omega,\label{ch4SM12v}\\
\dfrac{1}{2}\phi+\gamma^{+}\mathcal{R}u-\mathcal{V}\psi+\mathcal{W}\phi&=\gamma^{+}F_{0}-\Phi_{0}\label{ch4SM12g}\hspace{2em}on\hspace{0.5em}S,
\end{align}
\end{subequations}
where
\begin{equation}\label{ch4F0term}
F_{0}=\mathcal{P}f+V\Psi_{0}-W\Phi_{0}.
\end{equation}

 We remark that $F_{0}$ belongs to the space $H^{1}(\Omega)$ in virtue of the mapping properties of the surface and volume potentials, see Theorems \ref{ch4thmUR} and \ref{ch4thmappingVW}.

The system (M12), given by \eqref{ch4SM12v}-\eqref{ch4SM12g} can be written in matrix notation as 
\begin{equation*}
\mathcal{M}^{12}\mathcal{X}=\mathcal{F}^{12},
\end{equation*}
where $\mathcal{X}$ represents the vector containing the unknowns of the system,
\begin{equation*}
\mathcal{X}=(u,\psi,\phi)^{\top}\in H^{1}(\Omega)\times\widetilde{H}^{-\frac{1}{2}}(S_{D})\times\widetilde{H}^{\frac{1}{2}}(S_{N}),
\end{equation*}
the right hand side vector is \[\mathcal{F}^{12}:= [ F_{0}, \gamma^{+}F_{0} - \Psi_{0} ]^{\top}\in H^{1}(\Omega)\times H^{\frac{1}{2}}(S),\]
and the matrix operator $\mathcal{M}^{12}$ is defined by:
\begin{equation*}
   \mathcal{M}^{12}=
  \left[ {\begin{array}{ccc}
   I+\mathcal{R} & -V & W \\
   \gamma^{+}\mathcal{R} & -\mathcal{V} & \dfrac{1}{2}I + \mathcal{W} 
  \end{array} } \right].
\end{equation*}

We note that the mapping properties of the operators involved in the matrix imply the continuity of the operator
\begin{equation*}
    \mathcal{M}^{12}: H^{1}(\Omega)\times\widetilde{H}^{-\frac{1}{2}}(S_{D})\times\widetilde{H}^{\frac{1}{2}}(S_{N})\longrightarrow H^{1}(\Omega)\times H^{\frac{1}{2}}(S).
\end{equation*}

\begin{theorem}\label{ch4EqTh}
Let $f\in L_{2}(\Omega)$. Let $\Phi_{0}\in H^{\frac{1}{2}}(S)$ and $\Psi_{0}\in H^{-\frac{1}{2}}(S)$ be some fixed extensions of $\phi_{0}\in H^{\frac{1}{2}}(S_{D})$ and $\psi_{0}\in H^{-\frac{1}{2}}(S_{N})$ respectively. 
\begin{enumerate}
\item[i)] If some $u\in H^{1}(\Omega)$ solves the BVP \eqref{ch4BVP}, then the triple $(u, \psi, \phi )^{\top}\in H^{1}(\Omega)\times\widetilde{H}^{-\frac{1}{2}}(S_{D})\times\widetilde{H}^{\frac{1}{2}}(S_{N})$ where
\begin{equation}\label{ch4eqcond}
\phi=\gamma^{+}u-\Phi_{0},\hspace{4em}\psi=T^{+}u-\Psi_{0},\hspace{2em}on\hspace{0.5em}S,
\end{equation}
solves the BDIE system (M12). 

\item[ii)] If a triple $(u, \psi, \phi )^{\top}\in H^{1}(\Omega)\times\widetilde{H}^{-\frac{1}{2}}(S_{D})\times\widetilde{H}^{\frac{1}{2}}(S_{N})$ solves the BDIE system then $u$ solves the BVP and the functions $\psi, \phi$ satisfy \eqref{ch4eqcond}.

\item[iii)] The system (M12) is uniquely solvable. 
\end{enumerate}
\end{theorem}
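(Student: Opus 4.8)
The plan is to establish the equivalence statements (i) and (ii) first, and then deduce the unique solvability (iii) almost immediately by invoking the unique solvability of the BVP, Theorem \ref{ch4Thomsol}. This avoids any direct appeal to Fredholm theory for part (iii): once the equivalence is in place, everything reduces to the classical result for the BVP.

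For (i), I would assume $u\in H^{1}(\Omega)$ solves the BVP \eqref{ch4BVP}. Then $\mathcal{A}u=f\in L^{2}(\Omega)$, so $u\in H^{1,0}(\Omega;\mathcal{A})$ and the third Green identities \eqref{ch43GV}, \eqref{ch43GG} are available. Defining $\phi,\psi$ by \eqref{ch4eqcond} and substituting $\gamma^{+}u=\Phi_{0}+\phi$, $T^{+}u=\Psi_{0}+\psi$ into \eqref{ch43GV}--\eqref{ch43GG} reproduces the system equations \eqref{ch4SM12v}--\eqref{ch4SM12g}, once $F_{0}$ is recognised from \eqref{ch4F0term} and the jump relations of Theorem \ref{ch4thjumps} are used to rewrite $\gamma^{+}F_{0}$. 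The only non-mechanical point is that $\phi$ and $\psi$ must land in the correct spaces: since $\Phi_{0},\Psi_{0}$ extend $\phi_{0},\psi_{0}$ and $u$ satisfies \eqref{ch4BVP2}--\eqref{ch4BVP3}, one gets $r_{S_{D}}\phi=0$ and $r_{S_{N}}\psi=0$, whence $\phi\in\widetilde{H}^{\frac{1}{2}}(S_{N})$ and $\psi\in\widetilde{H}^{-\frac{1}{2}}(S_{D})$.

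For (ii), given a solution triple, I would rewrite \eqref{ch4SM12v} in the form \eqref{ch4G3ind} with $\Psi=\Psi_{0}+\psi$ and $\Phi=\Phi_{0}+\phi$. Lemma \ref{ch4lema1} then yields $u\in H^{1,0}(\Omega;\mathcal{A})$, $\mathcal{A}u=f$, and the identity $V\psi^{*}-W\phi^{*}=0$ in $\Omega$, where $\psi^{*}:=\Psi_{0}+\psi-T^{+}u$ and $\phi^{*}:=\Phi_{0}+\phi-\gamma^{+}u$. Taking the interior trace and using Theorem \ref{ch4thjumps} gives a first boundary relation $\mathcal{V}\psi^{*}+\tfrac{1}{2}\phi^{*}-\mathcal{W}\phi^{*}=0$. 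A second relation $\tfrac{1}{2}\phi^{*}-\mathcal{V}\psi^{*}+\mathcal{W}\phi^{*}=0$ follows by subtracting the trace Green identity \eqref{ch43GG} (now valid, since $u\in H^{1,0}$) from the boundary equation \eqref{ch4SM12g}. Adding the two relations cancels the $\mathcal{V}\psi^{*}$ and $\mathcal{W}\phi^{*}$ terms and leaves $\phi^{*}=0$; feeding this back into $V\psi^{*}=0$ and applying Lemma \ref{ch4lemma2} gives $\psi^{*}=0$. These are precisely \eqref{ch4eqcond}, and the mixed boundary conditions then follow from the support properties $r_{S_{D}}\phi=0$, $r_{S_{N}}\psi=0$, which give $r_{S_{D}}\gamma^{+}u=r_{S_{D}}\Phi_{0}=\phi_{0}$ and $r_{S_{N}}T^{+}u=r_{S_{N}}\Psi_{0}=\psi_{0}$.

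Finally, for (iii): Theorem \ref{ch4Thomsol} provides a unique $u$ solving the BVP, and part (i) furnishes an associated triple solving (M12), giving existence. For uniqueness, any two solution triples of (M12) have, by (ii), first components solving the same BVP, hence equal by Theorem \ref{ch4Thomsol}; the remaining components are then pinned down by \eqref{ch4eqcond}, so the triples coincide. I expect the genuine obstacle to be the algebraic core of (ii): producing the two independent boundary relations for $\psi^{*}$ and $\phi^{*}$ and combining them so that exactly one unknown drops out. This demands careful bookkeeping of the $\pm\tfrac{1}{2}$ jump terms of Theorem \ref{ch4thjumps}; the rest is substitution and the use of the support structure of the extensions $\Phi_{0},\Psi_{0}$.
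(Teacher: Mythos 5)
Your proposal is correct, and its overall architecture (substitution into the third Green identities for (i), Lemma \ref{ch4lema1} plus Lemma \ref{ch4lemma2} for (ii), and deduction of (iii) from the equivalence together with Theorem \ref{ch4Thomsol}) is the same as the paper's; parts (i) and (iii) match the paper essentially verbatim. The one place you diverge is the mechanism for killing $\phi^{*}=\Phi_{0}+\phi-\gamma^{+}u$ in part (ii). The paper does the boundary algebra \emph{first}: it takes the trace of the domain equation \eqref{ch4SM12v} and subtracts the boundary equation \eqref{ch4SM12g}, which immediately yields $\phi=\gamma^{+}u-\Phi_{0}$ (and hence the Dirichlet condition) before Lemma \ref{ch4lema1} is ever invoked; only then does it apply Lemma \ref{ch4lema1}, use $\phi^{*}=0$ to reduce identity \eqref{ch4lema1.0} to $V\psi^{*}=0$, and finish with Lemma \ref{ch4lemma2}. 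You instead apply Lemma \ref{ch4lema1} first and then extract $\phi^{*}=0$ by adding two boundary relations: the trace of $V\psi^{*}-W\phi^{*}=0$ and the difference \eqref{ch4SM12g}$\,-\,$\eqref{ch43GG}. The two routes are algebraically equivalent (your relation B is, in effect, the paper's trace-subtraction with the trace of the third Green identity inserted and removed), and your sign bookkeeping with the $\mp\tfrac{1}{2}$ jumps of Theorem \ref{ch4thjumps} checks out. The paper's ordering is marginally more economical: its trace-subtraction step requires nothing beyond $u\in H^{1}(\Omega)$ and the jump relations, whereas your relation B needs the validity of \eqref{ch43GG}, i.e.\ it must come \emph{after} Lemma \ref{ch4lema1} has upgraded $u$ to $H^{1,0}(\Omega;\mathcal{A})$ and shown $\mathcal{A}u=f$ --- a dependency you correctly flag but which makes your derivation of $\phi^{*}=0$ contingent on the lemma rather than independent of it.
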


\begin{proof}
First, let us prove item $i)$. Let $u\in H^{1}(\Omega)$ be a solution of the boundary value problem \eqref{ch4BVP} and let $\phi$, $\psi$ be defined by \eqref{ch4eqcond}.
Then, due to \eqref{ch4BVP2} and \eqref{ch4BVP3}, we have \[(\psi,\phi) \in \widetilde{H}^{-\frac{1}{2}}(S_{D})\times\widetilde{H}^{\frac{1}{2}}(S_{N}).\] 

 Then, it immediately follows from the third Green identities \eqref{ch43GV} and \eqref{ch43GG} that the triple $(u,\phi, \psi)$ solves BDIE system $\mathcal{M}^{12}$.

Let us prove now item $ii)$. Let the triple $(u, \psi,\phi )^{\top}\in H^{1}(\Omega)\times\widetilde{H}^{-\frac{1}{2}}(S_{D})\times\widetilde{H}^{\frac{1}{2}}(S_{N})$ solve the BDIE system. Taking the trace of the equation \eqref{ch4SM12v} and substract it from the equation \eqref{ch4SM12g}, we obtain
\begin{equation}\label{ch4M12a1}
\phi=\gamma^{+}u-\Phi_{0}, \hspace{1em} \text{on}\hspace{0.5em}S.
\end{equation}
This means that the first condition in \eqref{ch4eqcond} is satisfied. 
Now, restricting equation \eqref{ch4M12a1} to $S_{D}$, we observe that $\phi$ vanishes as $supp(\phi)\subset S_{N}$. Hence, $\phi_{0}=\Phi_{0}=\gamma^{+}u$ on $S_{D}$ and consequently, the Dirichlet condition of the BVP \eqref{ch4BVP2} is satisfied. 

We proceed using the Lemma \ref{ch4lema1} in the first equation of the system (M12), \eqref{ch4SM12v}, with $\Psi=\psi + \Psi_{0}$ and $\Phi = \phi + \Phi_{0}$ which implies that $u$ is a solution of the equation \eqref{ch4BVP1} and also the following equality:
\begin{equation*}\label{ch4M12a2}
V(\Psi_{0}+\psi - T^{+}u) - W(\Phi_{0} + \phi -\gamma^{+}u) = 0 \text{ in } \Omega.
\end{equation*}
By virtue of \eqref{ch4M12a1}, the second term of the previous equation* vanishes. Hence,
\begin{equation*}\label{ch4M12a3}
V(\Psi_{0}+\psi - T^{+}u)= 0, \quad \text{ in } \Omega.
\end{equation*}
Now, by virtue of Lemma \ref{ch4lemma2} we obtain
\begin{equation}\label{ch4M12a4}
\Psi_{0} + \psi - T^{+}u = 0,\quad\text{ on } S.
\end{equation}
Since $\psi$ vanishes on $S_{N}$, we can conclude that $\Psi_{0}=\psi_{0}$ on $S_{N}$. Consequently, equation \eqref{ch4M12a4} implies that $u$ satisfies the Neumann condition \eqref{ch4BVP3}. 

Item $iii)$ immediately follows from the uniqueness of the solution of the mixed boundary value problem \ref{ch4Thomsol}.
\end{proof}

\begin{lemma}\label{ch4remark}$(F_{0}, \gamma^{+}F_{0}-\Phi_{0})=0$ if and only if $(f, \Phi_{0},\Psi_{0})=0$
\end{lemma}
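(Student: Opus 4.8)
The plan is to treat the two implications separately, with the forward (``if'') direction being immediate and the reverse (``only if'') direction carrying all the content. For the ``if'' direction I would simply substitute $f=0$, $\Phi_{0}=0$, $\Psi_{0}=0$ into the definition \eqref{ch4F0term} of $F_{0}$; by linearity of the potentials $\mathcal{P}$, $V$, $W$ this gives $F_{0}=0$ in $\Omega$, whence $\gamma^{+}F_{0}=0$ and therefore $\gamma^{+}F_{0}-\Phi_{0}=0$ as well, so the pair vanishes.

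For the ``only if'' direction, assume both components vanish, i.e. $F_{0}=0$ in $\Omega$ and $\gamma^{+}F_{0}-\Phi_{0}=0$ on $S$. The first move is to extract $\Phi_{0}$: since $F_{0}\equiv 0$ in $\Omega$ its trace satisfies $\gamma^{+}F_{0}=0$, so the boundary relation forces $\Phi_{0}=\gamma^{+}F_{0}=0$ at once.

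With $\Phi_{0}=0$ the defining relation reduces to $\mathcal{P}f+V\Psi_{0}=0$ in $\Omega$. The key step is to recover $f$: using relations \eqref{ch4relP} and \eqref{ch4relSL} I would rewrite this as $\mathcal{P}_{\Delta}(f/a)+V_{\Delta}(\Psi_{0}/a)=0$ and apply the Laplace operator. Since $V_{\Delta}(\Psi_{0}/a)$ is harmonic in $\Omega$ and $\Delta\mathcal{P}_{\Delta}(f/a)=f/a$, this yields $f/a=0$; as $a$ is positive, $f=0$ in $\Omega$. Feeding $f=0$ back leaves $V\Psi_{0}=0$ in $\Omega$, and Lemma \ref{ch4lemma2} then gives $\Psi_{0}=0$, so the whole triple $(f,\Phi_{0},\Psi_{0})$ vanishes.

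The argument is essentially mechanical and I expect no genuine obstacle. The only point that requires care is the \emph{order} of deduction: one must first exploit $F_{0}=0$ (not merely the boundary relation) to kill $\Phi_{0}$, and only afterwards separate $f$ from $\Psi_{0}$ via the Laplacian together with Lemma \ref{ch4lemma2}. The other thing to state correctly is the normalisation convention $\Delta\mathcal{P}_{\Delta}=\mathrm{Id}$ and the harmonicity of $V_{\Delta}$ away from $S$, on which the separation of $f$ rests.
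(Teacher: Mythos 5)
Your proof is correct, but it is organised differently from the paper's. The paper proves the nontrivial direction by invoking Lemma \ref{ch4lema1} with $u=0$, $\Psi=\Psi_{0}$, $\Phi=\Phi_{0}$: since $F_{0}=0$ is exactly the relation \eqref{ch4G3ind} for the zero function, that lemma immediately yields $f=\mathcal{A}0=0$ together with the identity $V\Psi_{0}-W\Phi_{0}=0$ in $\Omega$; only \emph{afterwards} does the paper use $\gamma^{+}F_{0}-\Phi_{0}=0$ to get $\Phi_{0}=0$, and finally Lemma \ref{ch4lemma2} to get $\Psi_{0}=0$. You instead extract $\Phi_{0}=0$ from the trace relation \emph{first}, which makes the double-layer term drop out, and then you re-derive $f=0$ by hand via $\Delta\bigl(\mathcal{P}_{\Delta}(f/a)+V_{\Delta}(\Psi_{0}/a)\bigr)=f/a=0$ --- i.e.\ you inline the core mechanism of Lemma \ref{ch4lema1}'s proof rather than citing the lemma. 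Both routes are sound and both end with Lemma \ref{ch4lemma2}. What the paper's route buys is economy: one application of an already-proved lemma replaces the harmonicity/Newtonian-potential bookkeeping, and it delivers $f=0$ and the potential identity simultaneously. What your route buys is that, having killed $\Phi_{0}$ first, you never need relation \eqref{ch4relDL} or any property of $W$ at all, only \eqref{ch4relP} and \eqref{ch4relSL}. One small caveat: your closing remark that one \emph{must} eliminate $\Phi_{0}$ before separating $f$ is overstated --- the paper does the steps in the opposite order; applying the Laplacian with the harmonic terms $W_{\Delta}\Phi_{0}$ and $V_{\Delta}\bigl(\Phi_{0}\,\partial\ln a/\partial n\bigr)$ still present would kill them just as well, so your ordering is a simplification, not a necessity.
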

\begin{proof}
It is trivial that if $(f, \Phi_{0}, \Psi_{0})=0$ then $(F_{0}, \gamma^{+}F_{0}-\Phi_{0})=0$. Conversely, supposing that $(F_{0}, \gamma^{+}F_{0}-\Phi_{0})=0$, then taking into account equation \eqref{ch4F0term} and applying Lemma \ref{ch4lema1} with $F_{0}=0$ as $u$, we deduce that $f=0$ and $V\Psi_{0}-W\Phi_{0}=0$ in $\Omega$. Now, the second equality, $\gamma^{+}F_{0}-\Phi_{0}=0$, implies that $\Phi_{0}=0$ on $S$ and applying Lemma \ref{ch4lemma2} gives $\Psi_{0}=0$ on $S$. 
\end{proof}

\begin{theorem}
The operator \[\mathcal{M}^{12}:H^{1}(\Omega)\times\widetilde{H}^{-\frac{1}{2}}(S_{D})\times\widetilde{H}^{\frac{1}{2}}(S_{N})\longrightarrow H^{1}(\Omega)\times H^{\frac{1}{2}}(S),\] is invertible.
\end{theorem}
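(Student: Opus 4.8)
The plan is to deduce invertibility from two ingredients: injectivity of $\mathcal{M}^{12}$, which is already encoded in the equivalence Theorem \ref{ch4EqTh}, together with the fact that $\mathcal{M}^{12}$ is a compact perturbation of an invertible operator. For a Fredholm operator of index zero a trivial kernel forces invertibility, so these two ingredients suffice. I would first record injectivity: if $\mathcal{M}^{12}(u,\psi,\phi)^{\top}=0$, then $(u,\psi,\phi)$ solves the homogeneous system (M12), which is (M12) for the data $f=0$, $\Phi_{0}=0$, $\Psi_{0}=0$ (so that $F_{0}=0$ and $\gamma^{+}F_{0}-\Phi_{0}=0$). By item $ii)$ of Theorem \ref{ch4EqTh}, $u$ then solves the homogeneous mixed BVP \eqref{ch4BVP} and $\phi=\gamma^{+}u$, $\psi=T^{+}u$; uniqueness of the BVP (Theorem \ref{ch4Thomsol}) gives $u=0$ and hence $\phi=\psi=0$. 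Thus $\ker\mathcal{M}^{12}=\{0\}$.

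The heart of the matter is to write $\mathcal{M}^{12}=\mathcal{M}^{12}_{0}+\mathcal{K}$ with $\mathcal{K}$ compact and $\mathcal{M}^{12}_{0}$ invertible, where
\begin{equation*}
\mathcal{M}^{12}_{0}=
\left[ {\begin{array}{ccc}
I & -V & W \\
0 & -\mathcal{V} & \dfrac{1}{2}I
\end{array} } \right],
\qquad
\mathcal{K}=
\left[ {\begin{array}{ccc}
\mathcal{R} & 0 & 0 \\
\gamma^{+}\mathcal{R} & 0 & \mathcal{W}
\end{array} } \right].
\end{equation*}
Compactness of the entries of $\mathcal{K}$ follows from the results already established: $\mathcal{R}:H^{1}(\Omega)\to H^{1}(\Omega)$ and $\gamma^{+}\mathcal{R}:H^{1}(\Omega)\to H^{\frac{1}{2}}(S)$ are compact by Corollary \ref{ch4corcompact} (with $s=1$), and $\mathcal{W}:\widetilde{H}^{\frac{1}{2}}(S_{N})\to H^{\frac{1}{2}}(S)$ is compact by Theorem \ref{ch4thcompVW}. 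Hence $\mathcal{K}$ is compact.

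The main obstacle is the invertibility of $\mathcal{M}^{12}_{0}$. For a right-hand side $(\mathcal{F}_{1},\mathcal{F}_{2})\in H^{1}(\Omega)\times H^{\frac{1}{2}}(S)$, the first row of $\mathcal{M}^{12}_{0}$ determines $u$ explicitly by $u=\mathcal{F}_{1}+V\psi-W\phi\in H^{1}(\Omega)$ (using Theorem \ref{ch4thmappingVW}); because of this triangular structure, invertibility of $\mathcal{M}^{12}_{0}$ is equivalent to invertibility of the boundary operator
\begin{equation*}
T:\widetilde{H}^{-\frac{1}{2}}(S_{D})\times\widetilde{H}^{\frac{1}{2}}(S_{N})\longrightarrow H^{\frac{1}{2}}(S),
\qquad
T(\psi,\phi):=-\mathcal{V}\psi+\tfrac{1}{2}\phi .
\end{equation*}
To invert $T$, given $g\in H^{\frac{1}{2}}(S)$ I would first restrict the equation $T(\psi,\phi)=g$ to $S_{D}$. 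Since ${\rm supp}\,\phi\subset\overline{S_{N}}$ and $S_{D}$ is open and disjoint from $\overline{S_{N}}$, we have $r_{S_{D}}\phi=0$, so the restriction reads $-r_{S_{D}}\mathcal{V}\psi=r_{S_{D}}g$ and determines $\psi\in\widetilde{H}^{-\frac{1}{2}}(S_{D})$ uniquely, by the invertibility of $r_{S_{D}}\mathcal{V}$ in Theorem \ref{ch4thinvV} (case $s=\tfrac{1}{2}$). With $\psi$ fixed I would set $\phi:=2(g+\mathcal{V}\psi)$. The decisive observation, which resolves the apparent mismatch between the target $H^{\frac{1}{2}}(S)$ and the smaller space $\widetilde{H}^{\frac{1}{2}}(S_{N})$, is that this $\phi$ automatically belongs to $\widetilde{H}^{\frac{1}{2}}(S_{N})$: the choice of $\psi$ gives $r_{S_{D}}(g+\mathcal{V}\psi)=0$, so $g+\mathcal{V}\psi\in H^{\frac{1}{2}}(S)$ vanishes on the open set $S_{D}$ and therefore has support in $\overline{S_{N}}$. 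A direct substitution confirms $T(\psi,\phi)=g$, and the construction shows both unknowns are uniquely determined, so $T$ is a continuous bijection, hence invertible by the open mapping theorem. Consequently $\mathcal{M}^{12}_{0}$ is invertible.

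Finally I would conclude with the Fredholm alternative. Factoring $\mathcal{M}^{12}=\mathcal{M}^{12}_{0}\,\bigl(I+(\mathcal{M}^{12}_{0})^{-1}\mathcal{K}\bigr)$, the second factor is the identity plus a compact operator, hence Fredholm of index zero; injectivity of $\mathcal{M}^{12}$ established above forces injectivity of this factor, which by the Fredholm alternative is then invertible. Therefore $\mathcal{M}^{12}$ is a composition of invertible operators and is invertible. I expect the invertibility of $\mathcal{M}^{12}_{0}$, and within it the verification that $\phi=2(g+\mathcal{V}\psi)$ lands in $\widetilde{H}^{\frac{1}{2}}(S_{N})$, to be the only genuinely delicate step; everything else is a bookkeeping of the mapping and compactness properties from Theorems \ref{ch4thmappingVW}, \ref{ch4thcompVW}, \ref{ch4thinvV} and Corollary \ref{ch4corcompact}.
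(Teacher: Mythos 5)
Your proposal is correct and follows essentially the same route as the paper's own proof: the same decomposition into the triangular operator $\mathcal{M}^{12}_{0}$ plus a compact perturbation, the same resolution of $\mathcal{M}^{12}_{0}$ (solving for $\psi$ on $S_{D}$ via the invertibility of $r_{S_{D}}\mathcal{V}$, then recovering $\phi\in\widetilde{H}^{\frac{1}{2}}(S_{N})$ and $u$), and the same conclusion via index-zero Fredholm theory combined with injectivity from the equivalence theorem. Your write-up is in fact slightly more explicit than the paper's on two points: the verification that $\phi=2(g+\mathcal{V}\psi)$ lands in $\widetilde{H}^{\frac{1}{2}}(S_{N})$, and the injectivity argument, which the paper delegates to item $iii)$ of the equivalence theorem.
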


\begin{proof}
 Let $\mathcal{M}_{0}^{12}$ be the matrix operator defined by
\begin{equation*}\label{ch4M012}
  \mathcal{M}_{0}^{12}:=
  \left[ {\begin{array}{ccc}
   I & -V & W \\
   0 & -\mathcal{V} & \dfrac{1}{2}I \\
  \end{array} } \right].
\end{equation*}
The operator $\mathcal{M}_{0}^{12}$ is also bounded due to the mapping properties of the operators involved. Furthermore, the operator 
\begin{center}
$\mathcal{M}^{12}- \mathcal{M}_{0}^{12}: H^{1}(\Omega)\times\widetilde{H}^{-\frac{1}{2}}(S_{D})\times\widetilde{H}^{\frac{1}{2}}(S_{N})\longrightarrow H^{1}(\Omega)\times H^{\frac{1}{2}}(S)$
\end{center}
is compact due to the compact mapping properties of the operators $\mathcal{R}$ and $\mathcal{W}$, (cf. Theorem \ref{ch4corcompact} and Theorem \ref{ch4thcompVW}).

Let us prove that the operator $\mathcal{M}^{12}_{0}$ is invertible. For this purpose, we consider the following system with arbitrary right hand side $\widetilde{F}=[\widetilde{F_{1}},\widetilde{F_{2}}]^{\top}\in H^{1}(\Omega)\times H^{\frac{1}{2}}(S)$ and let $\mathcal{X}=(u,\psi,\phi)^{\top}\in H^{1}(\Omega)\times\widetilde{H}^{-\frac{1}{2}}(S_{D})\times\widetilde{H}^{\frac{1}{2}}(S_{N})$ be the vector of unknowns
\begin{equation}\label{ch4systemM120}
\mathcal{M}^{12}_{0}\mathcal{X}=\widetilde{F}.
\end{equation} 

Writing \eqref{ch4systemM120} component-wise,
\begin{subequations}
\begin{align}
 u- V\psi +W\phi &= \widetilde{F_{1}},\hspace{2em} in \hspace{0.5em}\Omega,\label{ch4i121}\\
 \frac{1}{2}\phi - \mathcal{V}\psi &= \widetilde{F_{2}}, \hspace{2em}on \hspace{0.5em}S.\label{ch4i122} 
\end{align}
\end{subequations}

Equation \eqref{ch4i122} restricted to $S_{D}$ gives:
\begin{equation}\label{ch4i123}
-r_{S_{D}}\mathcal{V}\psi = r_{S_{D}}\widetilde{F_{2}}.
\end{equation}

Due to the invertibility of the operator $\mathcal{V}$ (cf. Lemma \ref{ch4thinvV}), equation \eqref{ch4i123} is uniquely solvable on $S_{D}$.
Equation \eqref{ch4i123} means that $(\mathcal{V}\psi + \widetilde{F_{2}})\in \widetilde{H}^{\frac{1}{2}}(S_{N})$. Thus, the unique solvability of \eqref{ch4i123} implies that $\phi$ is also uniquely determined by the equation
\begin{equation*}
\phi=(2\mathcal{V}\psi +2\widetilde{F_{2}})\in \widetilde{H}^{\frac{1}{2}}(S_{N}).
\end{equation*}
Consequently, $u$ also is uniquely determined by the first equation \eqref{ch4i121} of the system. Furthermore, since $V\psi$, $W\phi\in H^{1}(\Omega)$, we have $u\in H^{1}(\Omega)$.

Thus, the operator $\mathcal{M}_{0}^{12}$ is invertible and the operator $\mathcal{M}^{12}$ is a zero index Fredholm operator due to the compactness of the operator $\mathcal{M}^{12}-\mathcal{M}_{0}^{12}$. Hence the Fredholm property and the injectivity of the operator $\mathcal{M}^{12}$, provided by item $iii)$ of Theorem  \ref{ch4remark}, imply the invertibility of operator $\mathcal{M}^{12}$.
\end{proof}

\section{Conclusions}
A new parametrix for the diffusion equation in non homogeneous media (with variable coefficient) has been analysed in this paper. Mapping properties of the corresponding parametrix based surface and volume potentials have been shown in corresponding Sobolev spaces. 

A BDIES for the original BVP has been obtained. Results of equivalence between the BDIES and the BVP has been shown along with the invertibility of the matrix operator defining the BDIES. 

Now, we have obtained an analogous system to the BDIES (M12) of \cite{mikhailov1} with a new family of parametrices which is uniquely solvable. Hence, further investigation about the numerical advantages of using one family of parametrices over another will follow.

Analogous results could be obtain for exterior domains following a similar approach as in \cite{exterior}. 

Further generalised results for Lipschitz domains can also be obtain by using the generalised canonical conormal derivative operator defined in \cite{traces, mikhailovlipschitz}. Moreover, these results can be generalised to Bessov spaces as in \cite{miksolandreg}.

\end{document}